\documentclass[final]{siamltex}
\usepackage[pagewise]{lineno}
%\linenumbers
\usepackage{mathrsfs}
\usepackage{amsmath}
  \usepackage{paralist}
  \usepackage{graphics} %% add this and next lines if pictures should be in esp format
  \usepackage{epsfig} %For pictures: screened artwork should be set up with an 85 or 100 line screen
\usepackage{graphicx}  \usepackage{epstopdf}%This is to transfer .eps figure to .pdf figure; please compile your paper using PDFLeTex or PDFTeXify.
 \usepackage[colorlinks=true]{hyperref}
\makeatother

\normalsize
\usepackage{amsfonts}
\usepackage{arydshln}
\numberwithin{equation}{section}

\setlength{\textwidth}{6.5truein} \setlength{\textheight}{9.3truein}
\setlength{\oddsidemargin}{-0.0in}
\setlength{\evensidemargin}{-0.0in}
\setlength{\topmargin}{-0.4truein}

\newtheorem{remark}[theorem]{Remark}

\newcommand{\lbl}[1]{\label{#1}}
\newcommand\ee{\end{equation}}
\newcommand\bes{\begin{eqnarray}}
\newcommand\ees{\end{eqnarray}}
\newcommand\bess{\begin{eqnarray*}}
\newcommand\eess{\end{eqnarray*}}

\title{Pointwise wave behavior of the Navier-Stokes equations in half space}

\author{LINGLONG DU\footnotemark[2]
\ \ Haitao Wang \footnotemark[3] % \footnotemark[4]
        }

\begin{document}

\maketitle
\renewcommand{\thefootnote}{\fnsymbol{footnote}}
 \footnotetext[2]{Department of Applied Mathematics, Donghua University,
Shanghai, P.R. China, ({\tt matdl@dhu.edu.cn}). }
\footnotetext[3]{Institute of Natural Sciences and School of Mathematical Sciences,
	Shanghai Jiao Tong University, Shanghai, P.R. China, ({\tt haitaowang.math@gmail.com, Corresponding author}).}
% \footnotetext[4]{Institute of Mathematics, Academia Sinica, Taipei, Taiwan, ({\tt haitaowang.math@gmail.com, Corresponding author}). }
 
\renewcommand{\thefootnote}{\arabic{footnote}}

\begin{abstract}

In this paper, we investigate the pointwise behavior of the solution for the compressible Navier-Stokes equations with mixed boundary condition in half space. Our results show that the leading order of  Green's function  for the linear system in half space are heat kernels propagating with sound speed in two opposite directions and reflected heat kernel (due to the boundary effect) propagating with positive sound speed. With the strong wave interactions, the nonlinear analysis exhibits the rich  wave structure: the diffusion waves interact with each other and consequently, the solution decays with algebraic rate.
  \end{abstract}

\begin{keywords}
mixed boundary condition,  Green's function,  nonlinear wave interaction
\end{keywords}

\begin{AMS}
76N10, 35B40, 35A08
\end{AMS}

\pagestyle{myheadings} \thispagestyle{plain} \markboth{LINGLONG DU and HAITAO WANG}{Long time wave behavior of the Navier-Stokes equations in half space}

\section{Introduction}

The Navier-Stokes  equations are the  fundamental system in the fluid dynamics.  Qualitative and quantitative  studies on the  fluid dynamic theory will help us to understand the physical phenomena  much better. 
Most of the interesting phenomena in the fluid dynamics are related to the presence of a physical boundary, such as  slip boundary layer,  thermal creep flow, and curvature effects. They can be understood only with the knowledge of the interaction between the fluid waves and the boundary layer.  In this paper, we begin to study the  one dimensional  isentropic Navier-Stokes equations.  We focus on the structure of long time solution  in the sense of pointwise wave propagation. There have been some essential progress in developing the pointwise estimate of long time solution for the initial-boundary value problem of different models such as   \cite{D, D1, Du,  LL, LY2, WY} and the references therein. 

The isentropic Navier-Stokes equations in Eulerian coordinate with mixed boundary condition in half line are
\begin{equation}
\begin{cases}
\rho_{t}+m_{x}=0,\ \  x\in \mathbb{R}_+,\ \ t>0, \\
m_{t}+\left(\frac{m^{2}}{\rho}+p\left(\rho\right)\right)_{x}=\nu\left(\frac{m}{\rho}\right)_{xx},\\
\left.\left(a_{1}\partial_{x}m+a_{2}m\right)\right|_{x=0}=0,
\end{cases}\label{eq:NL_prob*}
\end{equation}
where $\rho(x,t)>0$ is the density, $m(x,t)$ represents the momentum, and $p(\rho)$ represents the pressure.  $\nu$ is the viscosity, $a_1$ and $a_2$ are constants. Here we only treat the case  $a_1a_2<0$. The boundary condition of  Dirichlet type ($a_1=0$) and  Neumann type ($a_2=0$) are much simpler.  For the case of  $a_1a_2>0$, the linearized system is unstable. These are  explained in Section 2.   To overcome the complexity of the mixed type condition, we first construct Green's function for the linearized initial-boundary value problem.
 
We linearize the above system around $\rho=1$ and $m=0$, and denote
the perturbation still by $\rho$ and $m$:
\begin{equation}
\begin{cases}
\rho_{t}+m_{x}=0,\\
m_{t}+c^{2}\rho_{x}=\nu m_{xx}+Q,\\
\left.\left(a_{1}\partial_{x}m+a_{2}m\right)\right|_{x=0}=0,
\end{cases}\label{eq:NL_Cauchy}
\end{equation}
where $c=\sqrt{p'\left(1\right)}$ is the sound speed, and the nonlinear term
\[
Q\equiv\tilde{Q}_x=-\left[\frac{m^{2}}{1+\rho}+p\left(1+\rho\right)-p(1)-p'\left(1\right)\rho+\nu\left(\frac{\rho m}{1+\rho}\right)_{x}\right]_x.
\]

In \cite{KN}, the authors proved the global existence  for the initial-boundary value problem of compressible Navier-Stokes equations in three-dimensional half space and showed the convergence of the solution to the equilibrium state by using the energy estimates.  Later, \cite{KK,KK1} obtained  the decay rate of the solution for the half space  $\mathbb{R}^n_+$  for $n\ge 2$ with Dirichet boundary condition. They proved that if the initial perturbation of constant state is in $H^{s+l}\cap L^1$, where $s=[\frac{n}{2}]+1$ and $l$ is a nonnegative integer, then the $L^p$ norm solution of linearized problem has the optimal $O(t^{-\frac{n}{2}(1-\frac{1}{p})})$ decay rate with $p\in[2,\infty]$. Here we refine the $L^p$ estimates into more accurate pointwise estimates and give a better understanding of local nonlinear wave interactions in one dimensional case. For the high dimensional case, we leave it to the future, however.

The Green's function for the linearized initial-boundary  value problem satisfies the following equation system
 \bes
 \left\{\begin{array}{lll}\medskip
\left(\partial_t+\left( \begin{array}{ccc}\medskip  0 &1\\
 c^2&0 \end{array}\right)\partial_x-\left( \begin{array}{ccc}\medskip  0 &0\\
 0 &\nu \end{array}\right)\partial_{xx}\right)\mathbb{G}(x,t;y)=0,x>0,y>0, t>0,\\
\mathbb{G}(x,0;y)=\delta(x-y)I,\\
\begin{pmatrix}-a_{1}\partial_{t} & a_{2}\end{pmatrix}\mathbb{G}(0,t;y)=0. \end{array}\right.\lbl{1.8}
 \ees
Here $I$ is $2\times2$ identity matrix and $\delta(x-y)$ is the Dirac function. The boundary condition has
been rewritten in view of $\partial_{x}m=-\partial_{t}\rho$. 

Our first result is about the pointwise structure of  the Green's function  $\mathbb{G}(x,t;y)$:
      \begin{theorem}\lbl{l b}
      When $a_1a_2<0$, or $a_1=0$, or $a_2=0$, there exists a constant $C$ such that the Green's function $\mathbb{G}(x,t;y)$ of  linearized system (\ref{1.8})  has the following estimates for all $0\leq x,y<\infty$, $t\geq0$:
      \bes
    &&\left|D_x^{\alpha}\left(\mathbb{G}(x,t;y)-e^{-\frac{c^2t}{2\nu}}(\delta(x-y)-\delta(x+y))\left( \begin{array}{ccc}\medskip 1 &0\\
 0 &0 \end{array}\right)\right)\right|\nonumber\\
    &\le& O(1)t^{-\alpha/2}\left(\frac{e^{-\frac{(x-y+ct)^2}{2\nu t}}}{\sqrt{\nu t}}+\frac{e^{-\frac{(x-y-ct)^2}{2\nu t}}}{\sqrt{\nu t}}
 +\frac{e^{-\frac{(x+y-ct)^2}{(2\nu+\varepsilon) t}}}{\sqrt{\nu t}}\right)\nonumber\\
 & & +O(1)e^{-(|x-y|+t)/C}+O(1)e^{-(|x+y|+t)/C}.\lbl{1.8a}
 \ees
Here $\alpha$ is a non-negative integer.
 \end{theorem}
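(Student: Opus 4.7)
The plan is to construct $\mathbb{G}(x,t;y)$ explicitly in Laplace variables, then deform contours to read off the pointwise wave structure. First, take the Laplace transform in $t$ of system (\ref{1.8}), writing $\hat{\mathbb{G}}(x,s;y)$. This converts the PDE system to a linear ODE system in $x$ with $s$ as parameter. The characteristic equation is $\nu \lambda^3 - s \nu \lambda - (c^2 \lambda^2 - s^2)/\lambda $ up to rearrangement; more cleanly, one obtains a fourth-degree characteristic polynomial with two roots $\lambda_\pm(s)$ having negative real part and two with positive real part in the right half plane $\mathrm{Re}\, s > 0$. Only the decaying modes in $x$ as $x \to \infty$ are admissible. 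This determines $\hat{\mathbb{G}}$ up to a finite-dimensional choice fixed by the boundary condition at $x=0$.

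Next, decompose $\hat{\mathbb{G}}(x,s;y) = \hat{\mathbb{G}}_{\mathrm{full}}(x,s;y) + \hat{\mathbb{G}}_{\mathrm{ref}}(x,s;y)$, where $\hat{\mathbb{G}}_{\mathrm{full}}$ is the fundamental solution of the whole-line problem (continued by reflection/extension) and $\hat{\mathbb{G}}_{\mathrm{ref}}$ is an image-type correction of the form $e^{\lambda_-(s)(x+y)}\, R(s) + e^{\lambda_+(s)(x+y)}\, R'(s)$ with $R,R'$ rational functions of $\lambda_\pm(s)$ and the boundary coefficients $a_1,a_2$ chosen so that the mixed condition $(-a_1 s,\ a_2)\hat{\mathbb{G}}(0,s;y)=0$ holds. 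Here the condition $a_1 a_2 < 0$ enters: one checks directly that the denominator of $R,R'$ has no zero in $\mathrm{Re}\, s \ge 0$, so the reflected contribution is well-defined and stable (when $a_1 a_2 > 0$ a pole crosses into $\mathrm{Re}\, s > 0$, producing instability).

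Then invert the Laplace transform by contour shift. For $\hat{\mathbb{G}}_{\mathrm{full}}$, the analysis of \cite{LY2} and related works gives precisely the two sound-speed heat kernels $e^{-(x-y\pm ct)^2/(2\nu t)}/\sqrt{\nu t}$ plus the singular propagating part $e^{-c^2 t/(2\nu)} \delta(x-y)\,\mathrm{diag}(1,0)$ and exponentially small remainders $O(1)e^{-(|x-y|+t)/C}$; the derivative estimates follow from the usual $t^{-\alpha/2}$ gain upon differentiating. For $\hat{\mathbb{G}}_{\mathrm{ref}}$, expand $\lambda_\pm(s) = \mp s/c + \nu s^2/(2c^3) + \cdots$ for small $|s|$; the dominant phase $e^{\lambda_+(s)(x+y)}$ produces by stationary-phase/saddle-point estimation the reflected kernel $e^{-(x+y-ct)^2/((2\nu+\varepsilon)t)}/\sqrt{\nu t}$, where the small $\varepsilon > 0$ absorbs lower-order corrections in the expansion of $\lambda_+$. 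The backward-traveling image $e^{\lambda_-(s)(x+y)}$ has no stationary point in $t > 0$, contributing only to the exponentially small remainder $O(1)e^{-(|x+y|+t)/C}$; the Dirac piece $\delta(x+y)$ is supported only at the boundary $x=y=0$ and is thus a distributional artifact of the full-line kernel. Combining the two pieces yields (\ref{1.8a}).

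The main obstacle will be the large-$|s|$ behavior of the reflection coefficients $R,R'$ and the precise quantitative inversion of $\hat{\mathbb{G}}_{\mathrm{ref}}$. One must split the Bromwich contour into a compact ``low-frequency'' region where the heat-kernel approximation of $\lambda_\pm(s)$ is accurate (yielding the reflected Gaussian with a slightly enlarged variance $2\nu + \varepsilon$), and a ``high-frequency'' region where $R,R'$ must be shown bounded with at most polynomial growth so that shifting the contour to $\mathrm{Re}\, s = -1/C$ produces the exponential remainder $e^{-(|x+y|+t)/C}$. Verifying uniform boundedness of $R,R'$ along these contours, and ruling out singular cancellations at the branch points of $\lambda_\pm$, is the technical heart of the argument; this is where the assumption $a_1 a_2 < 0$ is used repeatedly.
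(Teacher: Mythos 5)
Your overall strategy (Laplace transform in $t$, solve the ODE in $x$, split off a full-line part plus an image correction fixed by the boundary condition, then invert) is the right skeleton and matches the paper up to the point of inversion, but the proposal has a concrete structural error and defers exactly the step that carries the difficulty. First, the transformed system is not governed by a quartic with two decaying and two growing modes: eliminating $\rho=-m_x/s$ gives the single scalar equation $m_{xx}=\frac{s^2}{\nu s+c^2}\,m$, so there is exactly \emph{one} admissible decaying mode $e^{-\lambda x}$ with $\lambda(s)=s/\sqrt{\nu s+c^2}$. This is not a cosmetic point: it is why a single scalar boundary condition determines the solution, and why the reflected part is $\frac{a_2+a_1\lambda}{a_2-a_1\lambda}\,\mathcal{L}[G](x+y,s)\,\mathrm{diag}(1,-1)$ with a \emph{scalar} reflection coefficient, rather than a pair of coefficients $R,R'$ attached to two phases. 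Your ``backward-traveling image $e^{\lambda_-(x+y)}$'' does not exist as a separate admissible mode. Second, your explanation of the widened variance $2\nu+\varepsilon$ (``absorbs lower-order corrections in the expansion of $\lambda_+$'') cannot be right as stated: the direct waves come from the same expansion of $\lambda$ and retain variance exactly $2\nu$. The widening is caused by the reflection coefficient itself, whose time-inverse is not a Dirac mass; quantifying that effect is precisely the part you label ``the technical heart'' and leave undone, so the proposal as written does not establish the third Gaussian in \eqref{1.8a}.

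The paper avoids the Bromwich-contour analysis of the reflection coefficient entirely by an elementary device worth knowing: write $\frac{a_2+a_1\lambda}{a_2-a_1\lambda}=-1+\frac{2a_2}{a_2-a_1\lambda}$ and observe that, since the $x$-dependence of the reflected term is $e^{-\lambda(x+y)}$, multiplication by $\frac{2a_2}{a_2-a_1\lambda}$ is equivalent to solving the first-order ODE $(a_2+a_1\partial_x)g=2a_2G$ in the physical variable, i.e.
\begin{equation*}
g(x,t)=2\gamma\int_0^\infty e^{-\gamma z}G(x+z,t)\,dz,\qquad \gamma=-\tfrac{a_2}{a_1}>0 .
\end{equation*}
The hypothesis $a_1a_2<0$ enters exactly here, making the kernel $e^{-\gamma z}$ decaying (and, equivalently, keeping the pole of $\frac{2a_2}{a_2-a_1\lambda}$ out of $\mathrm{Re}\,s\ge 0$ --- a fact you assert but do not verify). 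The reflected Gaussian with variance $2\nu+\varepsilon$ then follows from the elementary estimate of $E(x,t;\lambda,D_0)=\int_0^\infty e^{-\gamma z}e^{-(x+z-\lambda t)^2/(D_0t)}dz$ via the complementary error function (Lemma \ref{lem:func_E}). If you wish to keep your contour-deformation route, you must (i) correct the mode count, and (ii) actually carry out the uniform bounds on $\frac{a_2+a_1\lambda(s)}{a_2-a_1\lambda(s)}$ along the shifted contours and through the branch point $s=-c^2/\nu$; until then the argument is incomplete precisely where the theorem's new content lies.
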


 The pointwise estimate for the Green's function allows us to give the time-asymptotic behavior of the solution to the nonlinear initial-boundary value problem:
        \begin{theorem}\lbl{LC}
Assume that the initial data $\left(\rho_{0},m_{0}\right)$
satisfying $\left\Vert \left(\rho_{0}-1,m_{0}\right)\right\Vert _{H^{4}\left(\mathbb{R}_{+}\right)}\leq\varepsilon_{0}$
and 
\bess
\left|D_x^{\alpha}\left(\rho_{0}-1,m_{0}\right)\left(x\right)\right|=O(1)\varepsilon_{0}(1+x^2)^{-r/2},\ \ r>1/2,
\eess  for $\alpha\le 1$.
Then for $\varepsilon_{0}$ sufficiently small, there exists a unique
global classical solution to the problem \eqref{eq:NL_prob*} with
the mixed boundary condition where $a_{1}a_{2}<0$ or $a_1=0$, or $a_2=0$. Moreover, the
solution has the following pointwise estimates
 for $\alpha\le 1$, 
\bess
\left|D_x^{\alpha}\left(\rho-1,m\right)\left(x,t\right)\right|
\leq O(1)\varepsilon_{0}(1+t)^{-\alpha/4}\left[\left(x-c\left(t+1\right)\right)^{2}+\left(t+1\right)\right]^{-1/2},
\eess
here $c=\sqrt{p'\left(1\right)}$ is the sound speed.
\end{theorem}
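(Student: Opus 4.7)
The plan is to combine the Duhamel representation coming from Theorem \ref{l b} with a bootstrap argument in a weighted pointwise norm encoding the forward sound wave. I begin by writing $U = (\rho - 1, m)^T$ as
\begin{equation*}
U(x,t) = \int_0^\infty \mathbb{G}(x,t;y) U_0(y)\, dy + \int_0^t \int_0^\infty \mathbb{G}(x,t-s;y) \begin{pmatrix} 0 \\ \tilde{Q}_y(y,s) \end{pmatrix} dy\, ds,
\end{equation*}
and integrate the nonlinear source by parts in $y$ to move the derivative onto $\mathbb{G}$. The boundary term at $y=0$ is controlled by the boundary condition $(-a_1 \partial_t + a_2) \mathbb{G}|_{y=0} = 0$ applied to an appropriate row of the Green's function.

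For the linear part, I would use the decomposition in Theorem \ref{l b} directly. The singular piece $e^{-c^2 t/(2\nu)}(\delta(x-y) - \delta(x+y))$ acts on $\rho_0$ with exponential time decay and is dominated by any polynomial weight. Each of the three Gaussian kernels centered at $y = x \pm ct$ and $y = ct - x$, convolved with initial data of size $\varepsilon_0(1+y^2)^{-r/2}$, yields via standard kernel-tail estimates a bound of the form $O(1)\varepsilon_0 [(x - \sigma(t+1))^2 + (t+1)]^{-1/2}$ for $\sigma \in \{\pm c\}$. On $x \geq 0$ the forward sound wave and the reflected wave saturate the target bound, while the backward wave decays faster by $(1+t)^{-(r-1/2)}$; the exponentially small remainders in (\ref{1.8a}) are negligible.

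For the nonlinear step I would set up the a priori ansatz
\begin{equation*}
|D_x^\alpha U(x,t)| \leq M\varepsilon_0 (1+t)^{-\alpha/4} \Psi(x,t), \quad \alpha \leq 1, \qquad \Psi(x,t) := [(x - c(t+1))^2 + (t+1)]^{-1/2},
\end{equation*}
valid on $[0,T]$. Since $\tilde{Q}$ is quadratic in $U$ and $\partial_x U$ (up to higher order), the ansatz gives $|\tilde{Q}(y,s)| \leq C(M\varepsilon_0)^2 \Psi^2(y,s)$. After the integration by parts, the Duhamel term reduces to a wave--source convolution of the type
\begin{equation*}
\int_0^t \int_0^\infty \frac{e^{-(x-y-c(t-s))^2/(2\nu(t-s))}}{\nu(t-s)}\, \Psi^2(y,s)\, dy\, ds \leq C \Psi(x,t),
\end{equation*}
together with its reflected analogue in which $(x-y-c(t-s))$ is replaced by $(x+y-c(t-s))$. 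Closing these inequalities with $M$ large and $\varepsilon_0$ small completes the bootstrap; global existence then follows from local existence, the standard $H^4$ energy estimate, and continuation.

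The principal obstacle is the wave--source convolution above. The heat kernel concentrates along $y = x - c(t-s)$ while $\Psi^2(\cdot,s)$ concentrates along $y = c(s+1)$; these rays meet precisely where $x \approx c(t+1)$, i.e. on the peak of $\Psi$, producing a resonance that cannot be recovered by additional time decay. My remedy is to split the $(s,y)$ integration into the near-resonance region $|y - x + c(t-s)| \leq \sqrt{\nu(t-s)}$ versus its complement, and in $s$ into $s \leq t/2$ versus $s \geq t/2$, and to estimate each piece by the explicit algebraic form of $\Psi$. The reflected analogue is the most delicate: its peak in $(s,y)$ sweeps from $x = ct$ at $s = 0$ toward $x = 0$ at $s = t/2$, which forces an additional splitting across $s = t/2$ and exploits the smallness of $x$ in the boundary regime to avoid double counting the resonance.
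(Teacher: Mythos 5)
Your overall architecture coincides with the paper's: Duhamel representation, splitting of $\mathbb{G}$ into the singular $\delta$-part and the Gaussian long-wave part, an a priori ansatz in a weighted norm, integration by parts to move $\partial_y$ off the source $\tilde{Q}_y$, and closure via wave--source convolution estimates (which the paper imports as Lemmas \ref{lem:wave-int-J-1}--\ref{lem:wave-int-J-2} from Liu--Zeng rather than re-deriving by region splitting as you propose; your ansatz with only $\psi^1(x,t;c)$ is equivalent on $x\geq 0$ to the paper's $\mathbf{A}_0=\psi^1(\cdot;c)+\psi^1(\cdot;-c)$ since $\psi^1(x,t;-c)\leq\psi^1(x,t;c)$ there). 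Your identification of the same-speed resonance and of the reflected kernel as the delicate interactions is correct, and the splittings you describe are the standard route to the needed bound $O(1)\psi^1(x,t;c)$.

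There is, however, one genuine error: you claim the boundary term at $y=0$ produced by the integration by parts is ``controlled by the boundary condition $(-a_1\partial_t+a_2)\mathbb{G}|_{x=0}=0$.'' That condition is imposed in the $x$-variable at $x=0$; it says nothing about $\mathbb{G}(x,t-s;y)$ at $y=0$, and the boundary term
\[
\int_0^t \mathbb{G}_L(x,t-s;0)\begin{pmatrix}0\\ \tilde{Q}(0,s)\end{pmatrix}ds
\]
does not vanish. In the paper this is precisely the term $\mathcal{N}_{2,2}$ (and $\mathcal{N}_{2,b}$ for the derivative), and it is one of the more delicate pieces: it is estimated using $|\tilde{Q}(0,s)|=O(1)M^2(1+s)^{-2}$ (the source peak at $y=c(s+1)$ is far from $y=0$) together with a case analysis on $|x-ct|\lessgtr\sqrt{t+1}$ and a splitting of $[0,t]$ into $s\leq t/2$ and $s\geq t/2$; it contributes at the full order $M^2\mathbf{A}_0$, not a negligible remainder. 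Your argument as written omits this term, so you would need to add that estimate. A secondary, smaller gap: you assert the $(1+t)^{-1/4}$ loss for $\partial_x U$ without indicating where it comes from; in the paper it arises from the second $x$-derivative on $\mathbb{G}_L$ (kernel decay $(t-s)^{-3/2}$, producing $\log(t+1)$ factors via the $\alpha=3$ case of the interaction lemmas) and from the boundary term above, and some justification of why $-1/4$ suffices to close the derivative part of the ansatz is needed.
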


\begin{remark}
Comparing  to \cite{KK1},  we get the optimal pointwise estimate for $\rho$ and  $m$.  For $\alpha=1$,  we  only get  extra $(1+t)^{-1/4}$ time decay rate for the solution, this is due to the effects of  boundary  and  closure of the nonlinear interactions. There still leaves room for improvement. We will investigate it further in the future. 
\end{remark}

  \begin{corollary}Under the assumptions in Theorem \ref{LC},
we have the following optimal $L^p(\mathbb{R}^+)$ estimates of the solution
\bess
\left\|(\rho-1,m)(\cdot,t)\right\|_{L^p(\mathbb{R}^+)}\leq
O(1)\varepsilon_0(1+t)^{-\frac{1}{2}(1-\frac{1}{p})},\
\ p\in(1,\infty].
\eess

\end{corollary}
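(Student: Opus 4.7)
The plan is to derive the $L^p$ estimate directly by integrating the pointwise bound provided by Theorem \ref{LC}. From that theorem we know
\[
|(\rho-1,m)(x,t)| \le O(1)\varepsilon_0\bigl[(x-c(t+1))^2+(t+1)\bigr]^{-1/2},
\]
so raising to the $p$-th power and integrating over $x\in\mathbb{R}_+$ gives
\[
\|(\rho-1,m)(\cdot,t)\|_{L^p(\mathbb{R}_+)}^p \le O(1)\varepsilon_0^p\int_0^{\infty}\bigl[(x-c(t+1))^2+(t+1)\bigr]^{-p/2}\,dx.
\]

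Next I would change variables $x-c(t+1)=\sqrt{t+1}\,u$, which turns the integral into
\[
(t+1)^{-(p-1)/2}\int_{-c\sqrt{t+1}}^{\infty}(u^2+1)^{-p/2}\,du \le (t+1)^{-(p-1)/2}\int_{-\infty}^{\infty}(u^2+1)^{-p/2}\,du .
\]
The remaining integral is finite precisely when $p>1$, which is exactly the range stipulated in the corollary. Taking $p$-th roots yields the decay rate $(1+t)^{-\frac{1}{2}(1-1/p)}$. The endpoint case $p=\infty$ follows immediately from the pointwise estimate by evaluating at the crest $x=c(t+1)$ and recognizing that the bound is uniform in $x$, giving $(1+t)^{-1/2}$, consistent with the formula.

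There is essentially no obstacle here: the only technical point is ensuring the integral converges, which is why the range $p\in(1,\infty]$ is sharp. Optimality in the sense of matching the linear decay rates of \cite{KK,KK1} is inherited from the sharpness of the heat-kernel factor in Theorem \ref{l b}, so no additional work is required beyond the change of variables above.
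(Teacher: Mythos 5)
Your proof is correct and is exactly the standard derivation the paper leaves implicit (the corollary is stated without proof): integrate the pointwise bound of Theorem \ref{LC} after the rescaling $x-c(t+1)=\sqrt{t+1}\,u$, note that $\int_{\mathbb{R}}(1+u^2)^{-p/2}\,du<\infty$ precisely for $p>1$, and read off the $L^\infty$ case directly from the pointwise bound. The only cosmetic caveat is that the implied constant depends on $p$ and blows up as $p\to 1^+$, which is consistent with the stated range $p\in(1,\infty]$.
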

The proof of the nonlinear estimates is based on the pointwise description of Green's function for the linearized initial-boundary value problem and the  Duhamel's principle.  The nonlinearity in one dimensional problem  is  much stronger than that in higher dimensional cases, so we
need more accurate Green's function  to identify the exact leading order.  The leading term of the fundamental solution for the linearized Cauchy problem is the convective heat kernel. With the help of the connection between fundamental solution and Green's function in the transformed space,  extra mirror term is obtained due to the boundary effect for Green's function. Thus the leading waves  of Green's function propagate in two directions: one  are heat kernel and reflected heat kernel propagating with positive sound speed; the other is the heat kernel propagating with negative sound speed.  Due to the strong nonlinear interaction of these waves with source terms and  nonlinear terms, the solution decays with  algebraic rate.
  
  Throughout this paper we use $O(1)$, $C$, $D_0$, $D$, $\cdots$ to denote  universal positive constants.  Denote by $L^p$ and $W^{m,p}$ the usual Lebesgue and Sobolev spaces on $\mathbb{R}_+$ and $H^m=W^{m,2}$, with norms $\|\cdot\|_{L^p}, \|\cdot\|_{W^{m,p}},\ \|\cdot\|_{H^m}$, respectively.

  The rest of paper is arranged as follows: In Section 2, we construct Green's function for the initial-boundary value problem.  The  pointwise structures of the nonlinear solution are studied  in Section 3.  Section 4 contains some useful lemmas for the nonlinear coupling of different diffusion waves.

 \section{The Green's function for the initial-boundary value problem}
 
 \subsection{Fundamental solution for the Cauchy problem}
 The necessary preliminary for the construction of Green's function is the fundamental
solution of Cauchy problem. The fundamental solution for linearized isentropic Navier-Stokes equations solves the system
\begin{equation}
\begin{cases}
\left(\partial_{t}+\begin{pmatrix}0 & 1\\
c^{2} & 0
\end{pmatrix}\partial_{x}-\begin{pmatrix}0 & 0\\
0 & \nu
\end{pmatrix}\partial_{x}^{2}\right)G\left(x,t\right)=0, & x\in\mathbb{R},\:t>0,\\
G\left(x,0\right)=\delta\left(x\right)I.
\end{cases}\label{eq:LG}
\end{equation}

The pointwise estimate of  $G(x,t)$ is  studied by \cite{LZ, Z}. It can be estimated by studying the inverse Fourier transform of $ \mathcal{F}[G](\xi,t)$
        \bes
       \mathcal{F}[G](\xi,t)&&=\left( \begin{array}{ccc}\medskip \frac{\sigma_+e^{\sigma_-t}-\sigma_-e^{\sigma_+t}}{\sigma_+-\sigma_-}\ \  &-i\xi\frac{e^{\sigma_+t}-e^{\sigma_-t}}{\sigma_+-\sigma_-}\\
-i\xi c^2\frac{e^{\sigma_+t}-e^{\sigma_-t}}{\sigma_+-\sigma_-}\ \  & \frac{\sigma_+e^{\sigma_+t}-\sigma_-e^{\sigma_-t}}{\sigma_+-\sigma_-} \end{array}\right),\lbl{1.7}
        \ees
        where 
        \bess\sigma_{\pm}=-\frac{1}{2}\xi(\nu\xi\pm\sqrt{\nu^2\xi^2-4c^2}).\eess
    
Therefore, we have
    \bess
    G(x,t)&=&e^{-\frac{c^2t}{\nu}}\delta(x)\left( \begin{array}{ccc}\medskip 1 &0\\
 0 &0 \end{array}\right)+\frac{e^{-\frac{(x+ct)^2}{2\nu t}}}{\sqrt{\nu t}}\left( \begin{array}{ccc}\medskip \frac{1}{2} &\frac{1}{2c}\\
 \frac{c}{2} &\frac{1}{2} \end{array}\right)+\frac{e^{-\frac{(x-ct)^2}{2\nu t}}}{\sqrt{\nu t}}\left( \begin{array}{ccc}\medskip \frac{1}{2} &-\frac{1}{2c}\\
 -\frac{c}{2} &\frac{1}{2} \end{array}\right)\\
 &&+O(1)(t+1)^{-\frac{1}{2}}t^{-\frac{1}{2}}[e^{-\frac{(x+ct)^2}{Dt}}+e^{-\frac{(x-ct)^2}{Dt}}]+O(1)e^{-(|x|+t)/C},
    \eess
where  $D>0$ is a constant.
            
\subsection{The connection between Green's function and fundamental solution}
Introduce the Fourier transform and Laplace transform for $f(x,t)$:
\begin{eqnarray}
 \mathcal{F}[f](\xi,t)=\int_{ \mathbb{R}}e^{-i\xi x}f(x,t)dx,\\
  \mathcal{L}[f](x,s)=\int_0^{\infty}e^{-st}f(x,t)dt.
\end{eqnarray}

It turns out that the fundamental solution and the Green's function are
closely related in the transformed variables. Taking Laplace transform
in $t$ and Fourier transform in $x$ to the first equation in \eqref{eq:LG}, denoting
the transformed variables by $s$ and $\xi$ respectively, and representing
the transformed  one by $\mathcal{F} [\mathcal{L}[G]]\left(\xi,s\right)$, we obtain 
\[
\begin{pmatrix}s & i\xi\\
ic^{2}\xi & s+\nu\xi^{2}
\end{pmatrix}\mathcal{F} [\mathcal{L}[G]]\left(\xi,s\right)=\begin{pmatrix}1 & 0\\
0 & 1
\end{pmatrix}.
\]
Invert the matrix to yield
\[
\mathcal{F} [\mathcal{L}[G]]\left(\xi,s\right)=\frac{1}{s^{2}+\left(c^{2}+\nu s\right)\xi^{2}}\begin{pmatrix}s+\nu\xi^{2} & -i\xi\\
-ic^{2}\xi & s
\end{pmatrix}.
\]
Taking inverse Fourier transform to $\xi$ and using
\[
\frac{1}{2\pi}\int_{\mathbb{R}}\frac{e^{i\xi x}}{\xi^{2}+\frac{s^{2}}{c^{2}+\nu s}}d\xi=\frac{e^{-\lambda\left|x\right|}}{2\lambda},
\]
where $\lambda=\lambda\left(s\right)=s/\sqrt{\nu s+c^{2}}$,
we have
\[
\mathcal{L}[G]\left(x,s\right)=\frac{1}{\nu s+c^{2}}\begin{pmatrix}\nu\delta\left(x\right)+\frac{c^{2}s}{\nu s+c^{2}}\frac{e^{-\lambda\left|x\right|}}{2\lambda} & \frac{\mbox{sgn}\left(x\right)e^{-\lambda\left|x\right|}}{2}\\
\frac{c^{2}\mbox{sgn}\left(x\right)e^{-\lambda\left|x\right|}}{2} & \frac{e^{-\lambda\left|x\right|}}{2\lambda}s
\end{pmatrix}\,.
\]
In particular,
when $\bar{x}>0$,
\[
\mathcal{L}[G]\left(-\bar{x},s\right)=\frac{1}{\nu s+c^{2}}\begin{pmatrix}\frac{c^{2}s}{\nu s+c^{2}}\frac{1}{2\lambda} & -\frac{1}{2}\\
-\frac{c^{2}}{2} & \frac{1}{2\lambda}s
\end{pmatrix}e^{-\lambda\bar{x}}.
\]

Now we consider the initial-boundary value problem, which is the linearization of \eqref{eq:NL_Cauchy},
\begin{equation}\label{linear_prob}
\begin{cases}
\rho_{t}+m_{x}=0,\quad x>0,\:t>0,\\
m_{t}+c^{2}\rho_{x}=\nu m_{xx},\\
\begin{pmatrix}\rho\left(x,0\right) & m\left(x,0\right)\end{pmatrix}=\begin{pmatrix}\rho_{0}\left(x\right) & m_{0}\left(x\right)\end{pmatrix},\\
\left.\left(-a_{1}\rho_{t}+a_{2}m\right)\right|_{x=0}\equiv0.
\end{cases}
\end{equation}
Making use of the fundamental solution to take care the
initial data,
\[
\begin{pmatrix}\bar{\rho}\\
\bar{m}
\end{pmatrix}\left(x,t\right)\equiv\int_{0}^{\infty}G\left(x-y,t\right)\begin{pmatrix}\rho_{0}\left(y\right)\\
m_{0}\left(y\right)
\end{pmatrix}dy,
\]
then the functions $\left(\rho-\bar{\rho},\:m-\bar{m}\right)$, which
will be  denoted by $\left(\tilde{\rho},\:\tilde{m}\right)$, satisfy
the following homogeneous initial data problem
\[
\begin{cases}
\tilde{\rho}_{t}+\tilde{m}_{x}=0,\quad x>0,~t>0,\\
\tilde{m}_{t}+c^{2}\tilde{\rho}_{x}=\nu\tilde{m}_{xx},\\
\begin{pmatrix}\tilde{\rho}\left(x,0\right) & \tilde{m}\left(x,0\right)\end{pmatrix}=0,\\
\left.\left(-a_{1}\tilde{\rho}_{t}+a_{2}\tilde{m}\right)\right|_{x=0}=-\begin{pmatrix}-a_{1}\partial_{t} & a_{2}\end{pmatrix}\int_{0}^{\infty}G\left(-y,t\right)\begin{pmatrix}\rho_{0}\left(y\right)\\
m_{0}\left(y\right)
\end{pmatrix}dy\equiv b\left(t\right).
\end{cases}
\]
Differentiating the first equation with respect to $t$, the second
equation with respect to $x$, and by suitable combinations we find that
\[
\tilde{\rho}_{tt}-c^{2}\tilde{\rho}_{xx}=\nu\tilde{\rho}_{txx},\quad\tilde{m}_{tt}-c^{2}\tilde{m}_{xx}=\nu\tilde{m}_{txx}.
\]
Taking Laplace transform in $t$ and using homogenous initial data to
get
\[
\mathcal{L}[\tilde{\rho}]_{xx}=\frac{s^{2}}{\nu s+c^{2}}\mathcal{L}[\tilde{\rho}],\qquad\mathcal{L}[\tilde{m}]_{xx}=\frac{s^{2}}{\nu s+c^{2}}\mathcal{L}[\tilde{m}].
\]
Solving the ODE and dropping out the divergent mode as $x\to+\infty$,
we have
\[
\mathcal{L}[\tilde{\rho}]\left(x,s\right)=\mathcal{L}[\tilde{\rho}_{b}]\left(s\right)e^{-\lambda x},\quad\mathcal{L}[\tilde{m}]\left(x,s\right)=\mathcal{L}[\tilde{m}_{b}]\left(s\right)e^{-\lambda x},
\]
where $\lambda(s)=s/\sqrt{\nu s+c^{2}}$ is defined as before, $\tilde{\rho}_{b},~\tilde{m}_{b}$ represent the Dirichlet data. On the other
hand, from the first equation of \eqref{linear_prob} and boundary relationship we have
\[
s\mathcal{L}[\tilde{\rho}_{b}]-\lambda\mathcal{L}[\tilde{m}_{b}]=0,\quad-a_{1}s\mathcal{L}[\tilde{\rho}_{b}]+a_{2}\mathcal{L}[\tilde{m}_{b}]=\mathcal{L}[b](s),
\]
which imply that 
\[
\mathcal{L}[\tilde{\rho}_{b}]=\frac{\lambda\mathcal{L}[ b](s)}{s\left(a_{2}-a_{1}\lambda\right)},\quad\mathcal{L}[\tilde{m}_{b}]=\frac{\mathcal{L}[b](s)}{a_{2}-a_{1}\lambda},
\]
hence
\[
\begin{pmatrix}\mathcal{L}[\tilde{\rho}]\\
\mathcal{L}[\tilde{m}]
\end{pmatrix}\left(x,s\right)=\frac{1}{a_{2}-a_{1}\lambda}\begin{pmatrix}\frac{\lambda}{s}\\
1
\end{pmatrix}\mathcal{L}[b]\left(s\right)e^{-\lambda x}.
\]
Thus the solution 
\begin{align*}
\begin{pmatrix}\mathcal{L}[\rho]\\
\mathcal{L}[m]
\end{pmatrix}\left(x,s\right) & =\begin{pmatrix}\mathcal{L}[\bar{\rho}]\\
\mathcal{L}[\bar{m}]
\end{pmatrix}\left(x,s\right)+\begin{pmatrix}\mathcal{L}[\tilde{\rho}]\\
\mathcal{L}[\tilde{m}]
\end{pmatrix}\left(x,s\right)\\
 & =\int_{0}^{\infty}\left[\mathcal{L}[G]\left(x-y,s\right)+\frac{1}{a_{2}-a_{1}\lambda}\begin{pmatrix}\frac{\lambda}{s}\\
1
\end{pmatrix}\begin{pmatrix}a_{1}s & -a_{2}\end{pmatrix}\mathcal{L}[G]\left(-y,s\right)e^{-\lambda x}\right]\begin{pmatrix}\rho_{0}\left(y\right)\\
m_{0}\left(y\right)
\end{pmatrix}dy.
\end{align*}

Therefore the transformed Green's function $\mathcal{L}[\mathbb{G}]\left(x,s;y\right)$ is
\[
\mathcal{L}[\mathbb{G}]\left(x,s;y\right)=\mathcal{L}[G]\left(x-y,s\right)+\frac{1}{a_{2}-a_{1}\lambda}\begin{pmatrix}\frac{\lambda}{s}\\
1
\end{pmatrix}\begin{pmatrix}a_{1}s & -a_{2}\end{pmatrix}\mathcal{L}[G]\left(-y,s\right)e^{-\lambda x}.
\]
Direct calculation shows that the second term in $\mathcal{L}[\mathbb{G}]\left(x,s;y\right)$
is
\bess
&&\frac{1}{a_{2}-a_{1}\lambda}\begin{pmatrix}a_{1}\lambda & -\frac{a_{2}\lambda}{s}\\
a_{1}s & -a_{2}
\end{pmatrix}\frac{1}{\nu s+c^{2}}\begin{pmatrix}\frac{c^{2}s}{\nu s+c^{2}}\frac{1}{2\lambda} & -\frac{1}{2}\\
-\frac{c^{2}}{2} & \frac{1}{2\lambda}s
\end{pmatrix}e^{-\lambda\left(x+y\right)}\\
&=&\frac{a_{2}+a_{1}\lambda}{a_{2}-a_{1}\lambda}\frac{1}{2}\begin{pmatrix}\frac{c^{2}}{\left(\nu s+c^{2}\right)^{\frac{3}{2}}} & -\frac{1}{\nu s+c^{2}}\\
\frac{c^{2}}{\nu s+c^{2}} & -\frac{1}{\sqrt{\nu s+c^{2}}}
\end{pmatrix}e^{-\lambda\left(x+y\right)}.
\eess
On the other hand we notice that for $x>0$ and $y>0$
\begin{align*}
\mathcal{L}[G]\left(x+y,s\right) & =\frac{1}{\nu s+c^{2}}\begin{pmatrix}\frac{c^{2}s}{\nu s+c^{2}}\frac{1}{2\lambda} & \frac{1}{2}\\
\frac{c^{2}}{2} & \frac{1}{2\lambda}s
\end{pmatrix}e^{-\lambda\left(x+y\right)}\\
 & =\frac{1}{2}\begin{pmatrix}\frac{c^{2}}{\left(\nu s+c^{2}\right)^{\frac{3}{2}}} & \frac{1}{\nu s+c^{2}}\\
\frac{c^{2}}{\nu s+c^{2}} & \frac{1}{\sqrt{\nu s+c^{2}}}
\end{pmatrix}e^{-\lambda\left(x+y\right)}.
\end{align*}
Therefore the Green's function in transformed variable is
\[
\mathcal{L}[\mathbb{G}]\left(x,s;y\right)=\mathcal{L}[G]\left(x-y,s\right)+\frac{a_{2}+a_{1}\lambda}{a_{2}-a_{1}\lambda}\mathcal{L}[G]\left(x+y,s\right)\begin{pmatrix}1 & 0\\
0 & -1
\end{pmatrix}.
\]
In summary,  the solution of the system \eqref{linear_prob} is
\[
\begin{pmatrix}\rho\\
m
\end{pmatrix}\left(x,t\right)=\int_{0}^{\infty}\left[G\left(x-y,t\right)+\mathcal{L}^{-1}\left[\frac{a_{2}+a_{1}\lambda}{a_{2}-a_{1}\lambda}\right]\underset{t}{\ast}G\left(x+y,t\right)\begin{pmatrix}1 & 0\\
0 & -1
\end{pmatrix}\right]\begin{pmatrix}\rho_{0}\left(y\right)\\
m_{0}\left(y\right)
\end{pmatrix}dy.
\]

For $a_{1}a_{2}>0$, it is easy to see that $\frac{a_{2}+a_{1}\lambda\left(s\right)}{a_{2}-a_{1}\lambda\left(s\right)}$
has a pole in right  half complex plane, which leads to an exponential growth
in time. In particular, for Dirichlet boundary condition $a_{1}=0$,
we have
\[
\mathbb{G}\left(x,t;y\right)=G\left(x-y,t\right)+G\left(x+y,t\right)\begin{pmatrix}1 & 0\\
0 & -1
\end{pmatrix};
\]
for Neumann boundary condition $a_{2}=0$, we have
\[
\mathbb{G}\left(x,t;y\right)=G\left(x-y,t\right)-G\left(x+y,t\right)\begin{pmatrix}1 & 0\\
0 & -1
\end{pmatrix}.
\]

Let us focus on the nontrivial case that $a_{1}a_{2}<0.$ 
Taking inverse Laplace  transform with respect to $t$, we have
\[
\mathbb{G}\left(x,t;y\right)=G\left(x-y,t\right)+G_{mir}\left(x+y,t\right).
\]
Here the subscript ``mir'' stands for ``mirror'' since this part
is analogous to mirror image of original fundamental solution,
\[
G_{mir}\left(x+y, t\right)\equiv\mathcal{L}^{-1}\left[\frac{a_{2}+a_{1}\lambda}{a_{2}-a_{1}\lambda}\mathcal{L}\left[G\right]\left(x+y,s\right)\begin{pmatrix}1 & 0\\
0 & -1
\end{pmatrix}\right].
\]

Now let us compute the inverse Laplace transform for the mirror part of Green's
function. Note that
\[
\frac{a_{2}+a_{1}\lambda}{a_{2}-a_{1}\lambda}\mathcal{L}\left[G\right]\left(x+y,s\right)=-\mathcal{L}\left[G\right]\left(x+y,s\right)+\frac{2a_{2}}{a_{2}-a_{1}\lambda}\mathcal{L}\left[G\right]\left(x+y,s\right).
\]
Let 
\[
g\left(x,t\right)\equiv\mathcal{L}^{-1}\left[\frac{2a_{2}}{a_{2}-a_{1}\lambda}\mathcal{L}\left[G\right]\right]\left(x,t\right),
\]
then function $g\left(x,t\right)$ satisfies 
\[
\left(a_{2}+a_{1}\partial_{x}\right)g=2a_{2}G\left(x,t\right).
\]
Solving this ODE gives 
\begin{align*}
g\left(x,t\right)=2\gamma\int_{x}^{\infty}e^{-\gamma\left(z-x\right)}G\left(z,t\right)dz=2\gamma\int_{0}^{\infty}e^{-\gamma z}G\left(x+z,t\right)dz,
\end{align*}
where $\gamma\equiv-\frac{a_{2}}{a_{1}}>0$.
Therefore 
\begin{align*}
G_{mir}\left(x+y,t\right) & =\left(-G\left(x+y,t\right)+2\gamma\int_{0}^{\infty}e^{-\gamma z}G\left(x+y+z,t\right)dz\right)\begin{pmatrix}1 & 0\\
0 & -1
\end{pmatrix}.\\
\end{align*}
By the expression of the fundamental solution  and noticing that Dirac-delta function parts
vanish for $x>0$ and $y>0$, we have 
\begin{align*}
G_{mir}\left(x+y,t\right) & =-\left(\frac{e^{-\frac{\left(x+y+ct\right)^{2}}{2\nu t}}}{\sqrt{\nu t}}\left( \begin{array}{ccc}\medskip \frac{1}{2} &\frac{1}{2c}\\
 \frac{c}{2} &\frac{1}{2} \end{array}\right)+\frac{e^{-\frac{\left(x+y-ct\right)^{2}}{2\nu t}}}{\sqrt{\nu t}}\left( \begin{array}{ccc}\medskip \frac{1}{2} &-\frac{1}{2c}\\
 -\frac{c}{2} &\frac{1}{2} \end{array}\right)\right)\\
 & \quad+2\gamma\left\{ \frac{E\left(x+y,t;-c,2\nu\right)}{\sqrt{\nu t}}\left(\begin{array}{ccc}\medskip \frac{1}{2} &\frac{1}{2c}\\
 \frac{c}{2} &\frac{1}{2} \end{array}\right)+\frac{E\left(x+y,t;c,2\nu\right)}{\sqrt{\nu t}}\left( \begin{array}{ccc}\medskip \frac{1}{2} &-\frac{1}{2c}\\
 -\frac{c}{2} &\frac{1}{2} \end{array}\right)\right\} \\
 & \quad+O\left(1\right)\left(t+1\right)^{-1/2}t^{-1/2}\left(e^{-\frac{\left(x+y-ct\right)^{2}}{C^{*}t}}+e^{-\frac{\left(x+y+ct\right)^{2}}{C^{*}t}}\right).
\end{align*}
Here  the function $E$ is defined as follows:
\begin{lemma}
\label{lem:func_E}Define function 
\[
E\left(x,t;\lambda,D_{0}\right)\equiv\int_{0}^{\infty}e^{-\gamma z}e^{-\frac{\left(x+z-\lambda t\right)^{2}}{D_{0}t}}dz.
\]
Let $\gamma>0$, $\lambda>0$. Then there exists  a constant C such that
for any given $\varepsilon>0$
\[
\frac{\partial^{k}}{\partial x^{k}}E\left(x,t;\lambda,D_{0}\right)\leq O\left(1\right)\left(t^{-k/2}e^{-\frac{\left(x-\lambda t\right)^{2}}{\left(D_{0}+\varepsilon\right)t}}+e^{-\frac{\left|x\right|+t}{C}}\right).
\]
\end{lemma}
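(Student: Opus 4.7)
My plan is to prove the $k=0$ bound by direct analysis of the integral defining $E$, and then reduce $k\geq 1$ to this case by integration by parts in the dummy variable $z$.

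For $k=0$, I split by the sign of $x-\lambda t$. If $x\geq \lambda t$, then for every $z\geq 0$ one has $(x+z-\lambda t)^{2}\geq (x-\lambda t)^{2}$; pulling this minimum value out and using $\int_{0}^{\infty}e^{-\gamma z}\,dz=\gamma^{-1}$ yields $E\leq \gamma^{-1}e^{-(x-\lambda t)^{2}/(D_{0}t)}$, which already implies the claimed estimate. If $x<\lambda t$, set $a=\lambda t-x>0$, so the Gaussian factor in the integrand attains its maximum $1$ at the interior point $z=a$. Introducing the splitting parameter $\tau=\varepsilon/(D_{0}+\varepsilon)\in(0,1)$, I write $\int_{0}^{\infty}=\int_{0}^{a\tau/2}+\int_{a\tau/2}^{\infty}$. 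On $[0,a\tau/2]$ the elementary inequality $(z-a)^{2}\geq (1-\tau)a^{2}$ converts the Gaussian factor into $e^{-a^{2}/((D_{0}+\varepsilon)t)}$, producing the desired main term after integrating $e^{-\gamma z}$. On the complement, bounding $e^{-\gamma z}\leq e^{-\gamma\tau a/2}$ and extending the residual Gaussian integral to all of $\mathbb{R}$ leaves a remainder of size $O(\sqrt{t})\,e^{-\gamma\tau a/2}$, which must be absorbed.

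The absorption is the main technical step. If $a^{2}/((D_{0}+\varepsilon)t)$ is uniformly bounded, the target Gaussian is bounded below by a positive constant and the trivial estimate $E\leq \gamma^{-1}$ already gives the claim. If instead $a^{2}/((D_{0}+\varepsilon)t)\geq M$ with $M$ large, then $a\geq c_{0}\sqrt{t}$ and hence $\gamma\tau a/2\geq c_{1}\sqrt{t}$; using $\sqrt{t}\,e^{-\gamma\tau a/4}\leq O(1)$ gives $\sqrt{t}\,e^{-\gamma\tau a/2}\leq O(1)\,e^{-\gamma\tau a/4}$. The residual $e^{-\gamma\tau a/4}$ is controlled in two regimes: when $a\leq \gamma\tau(D_{0}+\varepsilon)t/4$ the linear exponent $\gamma\tau a/4$ exceeds the quadratic $a^{2}/((D_{0}+\varepsilon)t)$, so it is dominated by the Gaussian term; when $a>\gamma\tau(D_{0}+\varepsilon)t/4$, the exponent $\gamma\tau a/4$ grows linearly in $t$, and the constraint $|x|+t\leq (\lambda+1)t$ (inherent to $x<\lambda t$, $x\geq 0$) lets one bound this by $e^{-(|x|+t)/C}$ for $C$ large enough depending only on $\lambda,\gamma,\varepsilon,D_{0}$.

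For $k\geq 1$, I exploit the identity $\partial_{x}e^{-(x+z-\lambda t)^{2}/(D_{0}t)}=\partial_{z}e^{-(x+z-\lambda t)^{2}/(D_{0}t)}$ and integrate by parts in $z$: the contribution at $z=\infty$ vanishes and the one at $z=0$ gives $-e^{-(x-\lambda t)^{2}/(D_{0}t)}$, yielding
\[
\partial_{x}E(x,t;\lambda,D_{0})=\gamma E(x,t;\lambda,D_{0})-e^{-(x-\lambda t)^{2}/(D_{0}t)}.
\]
Iterating produces $\partial_{x}^{k}E=\gamma^{k}E-\sum_{j=0}^{k-1}\gamma^{k-1-j}\partial_{x}^{j}e^{-(x-\lambda t)^{2}/(D_{0}t)}$. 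Combining the $k=0$ estimate with the classical Hermite bound $|\partial_{x}^{j}e^{-(x-\lambda t)^{2}/(D_{0}t)}|\leq O(t^{-j/2})\,e^{-(x-\lambda t)^{2}/((D_{0}+\varepsilon)t)}$ (obtained by absorbing the polynomial prefactors via the same slight widening of the variance) delivers the claimed $t^{-k/2}$ scaling. The main obstacle is exactly the $\sqrt{t}$ absorption in the preceding paragraph; it is precisely what forces the mild widening of the variance from $D_{0}$ to $D_{0}+\varepsilon$ that appears in the statement.
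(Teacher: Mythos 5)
Your $k=0$ argument is correct and is a genuinely more elementary route than the paper's, which evaluates $E$ in closed form as $\frac{\sqrt{\pi D_0 t}}{2}e^{\gamma(x-\lambda t)+\gamma^2 D_0 t/4}\mathrm{Erfc}\bigl(\frac{x-\lambda t+\gamma D_0 t/2}{\sqrt{D_0 t}}\bigr)$ and then runs a case analysis on the argument of $\mathrm{Erfc}$ using its asymptotic expansion. Your direct splitting of the $z$-integral at $z=a\tau/2$, with the absorption of the $\sqrt{t}$ tail, reproduces the $k=0$ bound cleanly and makes transparent why the variance must widen from $D_0$ to $D_0+\varepsilon$.

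The $k\geq 1$ part, however, has a genuine gap. Your recursion $\partial_x E=\gamma E-e^{-(x-\lambda t)^2/(D_0 t)}$ is correct, but you then bound $\gamma^k E$ and each $\partial_x^j e^{-(x-\lambda t)^2/(D_0 t)}$ \emph{separately} in absolute value. The $\gamma^k E$ term and the $j=0$ term of the sum each carry no negative power of $t$: your own $k=0$ estimate for $E$ is $O(1)e^{-(x-\lambda t)^2/((D_0+\varepsilon)t)}+O(1)e^{-(|x|+t)/C}$ with no $t$-decay, so the best your argument can deliver is $|\partial_x^k E|\leq O(1)e^{-(x-\lambda t)^2/((D_0+\varepsilon)t)}+O(1)e^{-(|x|+t)/C}$, missing the claimed factor $t^{-k/2}$. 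The point is that near the wave front the two terms in the recursion nearly cancel: at $x=\lambda t$ one has $\gamma E=\gamma\int_0^\infty e^{-\gamma z}e^{-z^2/(D_0t)}dz\to 1$ and $e^{-(x-\lambda t)^2/(D_0t)}=1$, each of size $O(1)$, while their difference is $O(t^{-1})$; a term-wise bound destroys exactly the cancellation that produces the $t^{-k/2}$. This factor is not cosmetic — it is used downstream with $\alpha=1,2$ in the prefactors $(t-s)^{-1}$ and $(t-s)^{-3/2}$ of $\mathcal{N}_{2,1}$ and $\mathcal{N}_{2,a}$. To repair the step you must estimate the difference as a whole, e.g.\ write
\[
\gamma E-e^{-\frac{(x-\lambda t)^2}{D_0 t}}=\gamma\int_0^\infty e^{-\gamma z}\Bigl[e^{-\frac{(x+z-\lambda t)^2}{D_0 t}}-e^{-\frac{(x-\lambda t)^2}{D_0 t}}\Bigr]dz
\]
(using $\gamma\int_0^\infty e^{-\gamma z}dz=1$) and control the bracket by $\int_0^z\bigl|\partial_w e^{-(x+w-\lambda t)^2/(D_0 t)}\bigr|dw=O(z\,t^{-1/2})\sup_{0\le w\le z}e^{-(x+w-\lambda t)^2/((D_0+\varepsilon')t)}$, after which a splitting of the $z$-integral of the same type as your $k=0$ argument recovers $t^{-1/2}e^{-(x-\lambda t)^2/((D_0+\varepsilon)t)}$; alternatively, differentiate the explicit $\mathrm{Erfc}$ formula and take its asymptotic expansion to one more order, which is what the paper's terse remark about ``absorbing the extra terms'' implicitly relies on.
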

\begin{proof}
Straightforward computation shows that
\[
E\left(x,t;\lambda,D_{0}\right)=\frac{\sqrt{\pi D_{0}t}}{2}e^{\gamma\left(x-\lambda t\right)+\frac{\gamma^{2}D_{0}t}{4}}\mbox{Erfc}\left(\frac{x-\lambda t+\frac{\gamma D_{0}t}{2}}{\sqrt{D_{0}t}}\right),
\]
where $\mbox{Erfc}(z)=\frac{2}{\sqrt{\pi}}\int_{z}^{\infty}e^{-t^2}dt$.

We consider the following cases:

Case 1: $\frac{x-\lambda t+\frac{\gamma D_{0}t}{2}}{\sqrt{D_{0}t}}\leq0$; Case 2: $0<\frac{x-\lambda t+\frac{\gamma D_{0}t}{2}}{\sqrt{D_{0}t}}<K$; Case 3: $\frac{x-\lambda t+\frac{\gamma D_{0}t}{2}}{\sqrt{D_{0}t}}\geq K$. Here the positive constant $K$ is chosen to guarantee the validity of asymptotic expansion of $\mathrm{Erfc}$ function. In fact, any $K\geq2$ will do. 

For case 1, we have
\begin{align*}
x-\lambda t =\frac{x-\lambda t}{3}+\frac{2\left(x-\lambda t\right)}{3}\leq\frac{x-\lambda t}{3}-\frac{\gamma D_{0}t}{3},
\end{align*}
which implies
\begin{align*}
E\left(x,t;\lambda,D_{0}\right)=O\left(1\right)\sqrt{t}e^{-\frac{\gamma\left|x-\lambda t\right|}{3}-\frac{\gamma^{2}D_{0}t}{12}}=O\left(1\right)e^{-\frac{\left|x\right|+t}{C}}.
\end{align*}
For case 2, 
\[
-\frac{\gamma D_{0}t}{4}<x-\lambda t+\frac{\gamma D_{0}t}{4}<-\frac{\gamma D_{0}t}{4}+K\sqrt{D_{0}t},
\]
so
\begin{align*}
E\left(x,t;\lambda,D_{0}\right) \leq O\left(1\right)\sqrt{t}e^{-\frac{\gamma^{2}D_{0}t}{4}+K\gamma\sqrt{D_{0}t}} \leq O\left(1\right)e^{-\frac{t}{C}}\leq O\left(1\right)e^{-\frac{\left|x\right|+t}{C}}.
\end{align*}
The second inequality comes from the fact that the function exponentially decays in $t$ when $t$ large and bounded when $t$ small. The last inequality is due to $x$ is bounded by $t$ for case 2, so we can sacrifice part of the time decay to gain space decay.
 
For case 3, using expansion of Erfc function
\[
\mbox{Erfc}\left(x\right)=\frac{e^{-x^{2}}}{\sqrt{\pi}x}+O\left(1\right)\frac{e^{-x^{2}}}{x^{3}},
\]
we find that 
\[
E\left(x,t;\lambda,D_{0}\right)=O\left(1\right)e^{-\frac{\left(x-\lambda t\right)^{2}}{D_{0}t}}\frac{t}{x-\lambda t+\frac{\gamma D_{0}t}{2}}.
\]
There are two subcases:

Case (3.a): $x-\lambda t\geq K\sqrt{D_{0}t}-\frac{\gamma D_{0}t}{2}$
and $\left|x-\lambda t\right|\leq\frac{\gamma D_{0}t}{4};$

Case (3.b): $x-\lambda t\geq K\sqrt{D_{0}t}-\frac{\gamma D_{0}t}{2}$
and $\left|x-\lambda t\right|>\frac{\gamma D_{0}t}{4}.$

For the first one, the numerator and denominator are comparable, hence
\[
E\left(x,t;\lambda,D_{0}\right)=O\left(1\right)e^{-\frac{\left(x-\lambda t\right)^{2}}{D_{0}t}}.
\]
For the second one, absorbing factor $t$ by exponential decay yields
that
\[
E\left(x,t;\lambda,D_{0}\right)=O\left(1\right)e^{-\frac{\left(x-\lambda t\right)^{2}}{D_{1}t}},
\]
where $D_{1}$ can be chosen as any positive number slightly larger than
$D_{0}.$ 

For derivatives we just need to absorb the extra terms into exponential
function. Thus the lemma is proved. 
\end{proof}

\begin{remark}
The function $\frac{E\left(x,t;\lambda,D_{0}\right)}{\sqrt{4\pi D_{0}t}}$
is the solution of the following equation
\begin{align*}
f_{t}+\lambda f_{x} & =D_{0}f_{xx},\\
f\left(x,0\right) & =\begin{cases}
e^{-\gamma\left|x\right|} & x\leq0,\\
0 & x>0.
\end{cases}
\end{align*}
The above lemma just states that the function behaves like the heat kernel.
\end{remark}

Summarizing,
 the Green's function of the linearized initial-boundary value problem  (\ref{1.8}) can be represented as
  \bess
  \mathbb{G}(x,t;y)=G(x-y,t)-G(x+y,t)\left( \begin{array}{ccc}\medskip  1 &0\\
 0 &-1 \end{array}\right)+g(x+y,t),  0\leq x,y<\infty,
  \eess
  and satisfies the estimates (\ref{1.8a})  given in Theorem \ref{l b}.

\begin{remark}
  The third  term in the right hand side of (\ref{1.8a}) represents the reflections wave traveling with velocity $-c$, which is resulted from the presence of boundary.
\end{remark}

\section{Nonlinear stability}
A priori energy estimate for the half space problem  for three-dimensional  case was done by  \cite{KN}, here the problem with mixed boundary condition in half line is similar,  we just omit the proof  of the existence and focus on deriving the pointwise asymptotic behavior of the nonlinear problem directly.

From Theorem \ref{l b} and Duhamel's principle applied to \eqref{eq:NL_Cauchy}, one has the integral representation of the solution
\begin{align*}
U\left(x,t\right) & =\int_{0}^{\infty}\mathbb{G}\left(x,t;y\right)U_{0}\left(y\right)dy+\int_{0}^{t}\int_{0}^{\infty}\mathbb{G}\left(x,t-s;y\right)\begin{pmatrix}0\\
Q\left(y,s\right)
\end{pmatrix}dyds\\
 & \equiv\mathcal{I}\left(x,t\right)+\mathcal{N}\left(x,t\right).
\end{align*}
where $U\left(x,t\right)=\begin{pmatrix}\rho\left(x,t\right) & m\left(x,t\right)\end{pmatrix}^{T}$.
The first term is from initial data, the second term represents the
nonlinear coupling. From the assumptions on the initial data satisfies for $\alpha=0,1$,
\[
\left|\partial_{x}^{\alpha}U_{0}\right|\leq C\varepsilon_{0}(1+x^2)^{-r}, \ \ r>1/2.
\]

The initial data gives the structure of $\partial_{x}^{\alpha}\mathcal{I}$
for $\alpha\leq1$, 
\bess
\left|\partial_{x}^{\alpha}\mathcal{I}\left(x,t\right)\right|&\le&\left|\partial_x^\alpha\int_{0}^{\infty}\mathbb{G}_{S}\left(x,t;y\right)U_{0}\left(y\right)dy\right|+\left|\int_{0}^{\infty}\partial_{x}^{\alpha}\mathbb{G}_{L}\left(x,t;y\right)U_{0}\left(y\right)dy\right|\\
&\equiv&\mathcal{I}_{1}^{\alpha}+\mathcal{I}_{2}^{\alpha}.
\eess
Here  $\mathbb{G}_S$ is the short wave parts, corresponds to singular part in Green's function $\mathbb{G}$; while $\mathbb{G}_L$ is the long wave part which dominates the large time behavior. They are given respectively by
\[
\mathbb{G}_{S}\left(x,t;0\right)=e^{-\frac{c^2t}{\nu}}(\delta(x-y)-\delta(x+y))\left( \begin{array}{ccc}\medskip 1 &0\\
0 &0 \end{array}\right),
\]
\[
\begin{aligned}
\left|\partial_{x}^{\alpha}\mathbb{G}_{L}\left(x,t;y\right)\right| & =O(1)t^{-\alpha/2}\left(\frac{e^{-\frac{(x-y+ct)^2}{2\nu t}}}{\sqrt{\nu t}}+\frac{e^{-\frac{(x-y-ct)^2}{2\nu t}}}{\sqrt{\nu t}}+\frac{e^{-\frac{(x+y-ct)^2}{(2\nu+\varepsilon)  t}}}{\sqrt{\nu t}}\right)\\
&\quad +O(1)e^{-(|x-y|+t)/C}+O(1)e^{-(|x+y|+t)/C}. 
\end{aligned}
\]
In the rest of our paper,  following the method in  \cite{D2, DW} ,  we change all $t$  in the  expression of $\mathbb{G}_{L}$ to $t+1$ to avoid the singular point  if  necessary. 

By representation of singular part $\mathbb{G}_S$ and initial condition,
\[
\mathcal{I}_{1}^{\alpha}\left(x,t\right)\leq C e^{-\frac{c^2t}{\nu}}|\partial_x^\alpha U_0(x)|\leq C\varepsilon_{0}e^{-\frac{c^2t}{\nu}}
\left(x^2+1\right)^{-r}.
\]

To estimate $\mathcal{I}_{2}^{\alpha}$, we make use  of the estimate
for regular part of Green's function $\partial_{x}^{\alpha}\mathbb{G}_{L}$.
For $\alpha=0,1,$  by Lemma \ref{lem:intial_data} with replacing $x$ by $x+ct$ or $x-ct$, we have
\begin{align*}
\mathcal{I}_{2}^{\alpha}\left(x,t\right) & =\left|\partial_{x}^{\alpha}\int_{0}^{\infty}\mathbb{G}_{L}\left(x,t;y\right)U_{0}\left(y\right)dy\right|\\
 & \leq C\varepsilon_{0}\int_{0}^{\infty}(t+1)^{-\frac{\alpha}{2}}\left[\frac{e^{-\frac{(x-y+ct)^2}{2\nu (t+1)}}}{\sqrt{\nu (t+1)}}+\frac{e^{-\frac{(x-y-ct)^2}{2\nu  (t+1)}}}{\sqrt{\nu (t+1)}}
 +\frac{e^{-\frac{(x+y-ct)^2}{(2\nu+\varepsilon) (t+1)}}}{\sqrt{\nu  (t+1)}}\right]  \left(1+y^2\right)^{-r}dy\\
 & \leq C\varepsilon_{0}\left(1+t\right)^{-\frac{\alpha}{2}} \left\{  \left(t+1+\left(|x|-c(t+1)\right)^2\right)^{-r} + \frac{e^{-\frac{(x-ct)^2}{E(t+1)}}}{\sqrt{t+1}}\right\}\,.
\end{align*}
It is easy to see  that $\mathcal{I}_{1}^{\alpha}$  has the same estimates as that of $\mathcal{I}_{2}^{\alpha}$.

Now we introduce the following notations:
\begin{align*}
\theta^{\alpha}\left(x,t;\lambda,D\right) &\equiv\left(t+1\right)^{-\alpha/2}e^{-\frac{\left[x-\lambda\left(t+1\right)\right]^{2}}{D\left(t+1\right)}},\\
\psi^{\alpha}\left(x,t;\mu\right) & \equiv \left(\sqrt{t+1}+\left|x-\mu(t+1)\right|\right)^{-\alpha}.
\end{align*}
Set 
\begin{align*}
\mathbf{A}_{0}\left(x,t\right) & =\psi^{1}\left(x,t;c\right)+\psi^{1}\left(x,t;-c\right),
\end{align*}
then there exists some positive constant $\mathscr{C}_0$ such that 
\begin{align*}
\left|\mathcal{I}\left(x,t\right)\right|\leq\mathscr{C}_{0}\varepsilon_{0}\mathbf{A}_{0}\left(x,t\right),\qquad\left|\partial_{x}\mathcal{I}\left(x,t\right)\right|\leq\mathscr{C}_{0}\varepsilon_{0}(t+1)^{-1/2}\mathbf{A}_{0}\left(x,t\right).
\end{align*}

Introduce
\begin{equation}\label{eq:ansatz}
M(T)=\sup_{0< t< T}\{\|U\mathbf{A}_{0}^{-1}\|_{L^\infty}+\|(t+1)^{1/4}U_x\mathbf{A}_{0}^{-1}\|_{L^\infty}\},
\end{equation}
then we have
\begin{equation}\label{5.14}
\left| U(x,t)\right| \leq M(t)\mathbf{A}_{0}\left(x,t\right),\quad |{U}_x(x,t)|\leq M(t)(1+t)^{-1/4}\mathbf{A}_{0}\left(x,t\right).
\end{equation}
Our goal is to show the uniform boundedness of $M(t)$ and thereby the pointwise decay of the solution as expressed in \eqref{5.14}. 
\begin{remark}
Note that there is a discrepancy between the time decay rates of the terms calculated from initial data and that of our ansatz. Actually we make this ansatz is mainly for the closure of nonlinear estimate.
\end{remark}

Now we consider the nonlinear coupling and its derivatives $\partial_{x}^{\alpha}\mathcal{N}$
for $\alpha\leq1$.  To this end, we need the estimate of nonlinear
term $Q\left(x,t\right)=\tilde{Q}_{x}$. 
Straightforward
computation shows that
\bes
&&\left|\tilde{Q}\left(x,t\right)\right| =O\left(1\right)M^{2}(t)\left[\psi^{2}\left(x,t;c\right)+\psi^{2}\left(x,t;-c\right)\right].
\lbl{5.14aa}
\ees
Now we begin to estimate the nonlinear coupling for $\alpha=0,1$.
\begin{align*}
\partial_{x}^{\alpha}\mathcal{N}\left(x,t\right) &=\int_{0}^{t}\int_{0}^{\infty}\partial_{x}^{\alpha}\mathbb{G}\left(x,t-s;y\right)
\begin{pmatrix}0\\
Q\left(y,s\right)
\end{pmatrix}dyds\\
 & =\int_{0}^{t}\int_{0}^{\infty}\partial_{x}^{\alpha}\mathbb{G}_{S}\left(x,t-s;y\right)\begin{pmatrix}0\\
Q\left(y,s\right)
\end{pmatrix}dyds+\int_{0}^{t}\int_{0}^{\infty}\partial_{x}^{\alpha}\mathbb{G}_{L}\left(x,t-s;y\right)\begin{pmatrix}0\\
\partial_{y}\tilde{Q}\left(y,s\right)
\end{pmatrix}dyds\\
 & \equiv\partial_{x}^{\alpha}\mathcal{N}_{1}+\partial_{x}^{\alpha}\mathcal{N}_{2}.
\end{align*}

From the matrix multiplication of $\mathbb{G}_S$ and $(0,Q)^T$, we have
\[
\left|\partial_{x}^{\alpha}\mathcal{N}_{1}\right|=\left|\int_{0}^{t}\int_{0}^{\infty}\partial_{x}^{\alpha}\mathbb{G}_{S}\left(x,t-s;y\right)\begin{pmatrix}0\\
Q(y,s)
\end{pmatrix}dyds\right|=0.
\]

For $\mathcal{N}_{2}$, we integrate  by parts
\begin{align*}
\left|\mathcal{N}_{2}\right|& =\left|\int_{0}^{t}\int_{0}^{\infty}\mathbb{G}_{L}\left(x,t-s;y\right)\partial_{y}\tilde{Q}\left(y,s\right)dyds\right|\\
 &\le\left|\int_{0}^{t}\int_{0}^{\infty}\partial_{y}\mathbb{G}_{L}\left(x,t-s;y\right)\tilde{Q}\left(y,s\right)dyds\right|+\left|\int_{0}^{t}\mathbb{G}_{L}\left(x,t-s;y\right)\tilde{Q}\left(y,s\right)|_{y=0}^{\infty}ds\right|\\
 & \le O\left(1\right)M^{2}(t)\int_{0}^{t}\int_{0}^{\infty}\left(t-s\right)^{-1}\left[e^{-\frac{\left(x-y-c\left(t-s\right)\right)^{2}}{2\nu\left(t-s\right)}}+e^{-\frac{\left(x-y+c\left(t-s\right)\right)^{2}}{2\nu\left(t-s\right)}}+e^{-\frac{\left(x+y-c\left(t-s\right)\right)^{2}}{(2\nu+\varepsilon)\left(t-s\right)}}\right]\\
 & \qquad\times\left[\psi^{2}\left(y,s;c\right)+\psi^{2}\left(y,s;-c\right)\right]dyds\\
 &\quad +O(1)M^{2}(t)\int_{0}^{t}\left(t-s\right)^{-1/2}\left[e^{-\frac{\left(x-c\left(t-s\right)\right)^{2}}{2\nu\left(t-s\right)}}+e^{-\frac{\left(x+c\left(t-s\right)\right)^{2}}{2\nu\left(t-s\right)}}+e^{-\frac{\left(x-c\left(t-s\right)\right)^{2}}{(2\nu+\varepsilon)\left(t-s\right)}}\right]](1+s)^{-2}ds\\
 &=\mathcal{N}_{2,1}+\mathcal{N}_{2,2}.
\end{align*}
For $\mathcal{N}_{2,1}$, we first consider
\[
\int_{0}^{t}\int_{0}^{\infty}(t-s)^{-1}e^{-\frac{\left(x-y-c\left(t-s\right)\right)^{2}}{2\nu\left(t-s\right)}}\psi^{2}\left(y,s;c\right)dyds.
\]
Noting $\psi^2(y,s;c)\leq C(t+1)^{-1/4}\psi^{3/2}(y,s;c)$, thus we  apply Lemma \ref{lem:wave-int-J-1} with $\alpha=2$, $\alpha'=0$ and $\beta=\frac{1}{2}$ to obtain it is bounded by
\[
O(1)\big[\theta^1(x,t;c,2\nu+\varepsilon)+(t+1)^{1/4}\psi^{3/2}(x,t;c)\big]\leq O(1)\psi^1(x,t;c).
\]
As for the interaction between waves with different propagation speeds, say 
\[
\int_{0}^{t}\int_{0}^{\infty}(t-s)^{-1}e^{-\frac{\left(x-y-c\left(t-s\right)\right)^{2}}{2\nu\left(t-s\right)}}\psi^{2}\left(y,s;-c\right)dyds,
\]
we apply Lemma \ref{lem:wave-int-J-2} with $\alpha=2$, $\alpha'=0$  and $\beta=\frac{1}{2}$ to dominate it by
\[
\begin{aligned}
&O(1)\theta^{3/2}(x,t;c,2\nu+\varepsilon)+ O(1)(t+1)^{1/4}\left[(x-c(t+1))^2+(t+1)^{3/2}\right]^{-3/4}\\
&+O(1)(t+1)^{1/4}\psi^1(x,t;-c)\left[(x-c(t+1))^2+(t+1)^2\right]^{-1/4}+O(1)\psi^{1/2}(x,t;c)\psi^{1/2}(x,t;-c)\\
&\leq O(1)\left[\psi^1(x,t;c)+\psi^1(x,t;-c)\right].
\end{aligned}
\]
The other terms in $\mathcal{N}_{2,1}$ can be estimated similarly, hence we have
\begin{equation}\label{eq:N21}
\mathcal{N}_{2,1}=O\left(1\right)M^{2}(t)\left[\psi^{1}\left(x,t;c\right)+\psi^{1}\left(x,t;-c\right)\right]=O\left(1\right)M^{2}(t)\mathbf{A}_{0}\left(x,t\right).
\end{equation}

For $\mathcal{N}_{2,2}$, here we only compute one of them since the others can be treated identically.
\begin{align*}
\left(I\right) &= \int_{0}^{t/2}\left.\frac{e^{-\frac{\left(x-y-c\left(t-s\right)\right)^{2}}{2\nu\left(t-s+1\right)}}}{\sqrt{\nu\left(t-s+1\right)}}\tilde{Q}(y,s)\right|_{y=0}^{\infty}ds+\int_{t/2}^{t}\left.\frac{e^{-\frac{\left(x-y-c\left(t-s\right)\right)^{2}}{2\nu\left(t-s+1\right)}}}{\sqrt{\nu\left(t-s+1\right)}}\tilde{Q}(y,s)\right|_{y=0}^{\infty}ds\\
 & =CM^{2}(t)\int_{0}^{t/2}\left(t-s+1\right)^{-1/2}e^{-\frac{\left(x-c\left(t-s\right)\right)^{2}}{2\nu\left(t-s+1\right)}}\left(1+s\right)^{-2}ds\\
 & \quad+CM^{2}(t)\int_{t/2}^{t}\left(t-s+1\right)^{-1/2}e^{-\frac{\left(x-c\left(t-s\right)\right)^{2}}{2\nu\left(t-s+1\right)}}\left(1+s\right)^{-2}ds\\
 & \equiv\left(I_{1}\right)+\left(I_{2}\right).
\end{align*}

To estimate the term $\left(I_{1}\right)$, we have two cases: $\left|x-ct\right|\leq\sqrt{t+1}$
and  $\left|x-ct\right|>\sqrt{t+1}$. 
When $\left|x-ct\right|\leq\sqrt{t+1}$, we have
\begin{align*}
\left(I_{1}\right) & \leq CM^{2}(t)\left(1+t\right)^{-1/2}\int_{0}^{t/2}\left(1+s\right)^{-2}ds \leq CM^{2}\left(\sqrt{1+t}+|x-c(t+1)|\right)^{-1}.
\end{align*}
When  $\left|x-ct\right|>\sqrt{t+1}$, we decompose the integral interval into two parts:
\[
A_{1}=\left\{ 0\leq s\leq t/2;cs\leq\left|x-ct\right|/K\right\} ,\quad A_{2}=\left\{ 0\leq s\leq t/2;cs>\left|x-ct\right|/K\right\} ,
\]
where $K$ is chosen suitably large. Then
\begin{align*}
\left(I_{1}\right) &\leq CM^{2}(t)\left(1+t\right)^{-1/2}\left\{ \int_{A_{1}}e^{-\frac{\left(x-ct\right)^{2}}{2\nu\left(t+1\right)}}\left(1+s\right)^{-2}ds+\int_{A_{2}}\left(1+\left|x-ct\right|\right)^{-1}\left(1+s\right)^{-1}ds\right\} \\
 & =CM^{2}(t)\left(1+t\right)^{-1/2}\left[e^{-\frac{\left(x-ct\right)^{2}}{D_{0}\left(t+1\right)}}+\ln(1+t)\psi^{1}\left(x,t;c\right)\right].
\end{align*}

For $\left(I_{2}\right)$, the calculation is similar. Consider two cases: $\left|x-ct\right|\leq\sqrt{t+1}$
and $\left|x-ct\right|>\sqrt{t+1}$. For $\left|x-ct\right|\leq\sqrt{t+1}$, we have
\begin{align*}
\left(I_{2}\right) & \leq CM^{2}(t)\left(1+t\right)^{-2}(1+t)^{1/2}\\
 & \le CM^{2}(t)\psi^{1}\left(x,t;c\right).
\end{align*}
For $\left|x-ct\right|>\sqrt{t+1}$, the decomposition now becomes
\bess
A_{1}=\left\{ t/2\leq\tau\leq t;c\tau\leq\left|x-ct\right|/K\right\} ,\quad A_{2}=\left\{ t/2\leq\tau\leq t;c\tau>\left|x-ct\right|/K\right\} ,
\eess
where $K$ is suitably large, which follows that
\begin{align*}
\left(I_{2}\right) & \leq CM^{2}(t)\left(1+t\right)^{-2}\int_{A_{1}}e^{-\frac{\left(x-ct\right)^{2}}{2\nu\left(t+1\right)}}\left(t-\tau\right)^{-1/2}d\tau\\
 & \quad+CM^{2}(t)\int_{A_{2}}\left(t-\tau\right)^{-1/2}\left(1+\left|x-ct\right|\right)^{-3/2}\left(1+\tau\right)^{-1/2}d\tau\\
 & =CM^{2}(t)\left[\left(1+t\right)^{-3/2}e^{-\frac{\left(x-ct\right)^{2}}{D_{0}\left(t+1\right)}}+\psi^{3/2}\left(x,t;c\right)\right].
\end{align*}
Therefore 
\[
\left(I_{1}\right)+\left(I_{2}\right)\leq CM^{2}(t)\mathbf{A}_{0}\left(x,t\right).
\]
Hence we have
\[
\mathcal{N}_{2,2}\leq C M^2(t)\mathbf{A}_{0}\left(x,t\right).
\]
Together with  \eqref{eq:N21} to yield
\[
\mathcal{N}_{2} =O\left(1\right)M^{2}(t)\mathbf{A}_{0}\left(x,t\right).
\]

For $\partial_{x}\mathcal{N}_{2}$,  we can use integration by parts to transfer the derivative in $y$ to derivative in $x$ to get 
\begin{align*}
\left|\partial_{x}\mathcal{N}_{2}\right|&=\left|\int_{0}^{t}\int_{0}^{\infty}\partial_{x}\mathbb{G}_{L}\left(x,t-s;y\right)\partial_{y}\tilde{Q}\left(y,s\right)dyds\right|\\
 &\le\left|\int_{0}^{t}\int_{0}^{\infty}\partial_{x}^2\mathbb{G}_{L}\left(x,t-s;y\right)\tilde{Q}\left(y,s\right)dyds\right|+\left|\int_{0}^{t}\partial_{x}\mathbb{G}_{L}\left(x,t-s;y\right)\tilde{Q}\left(y,s\right)|_{y=0}^{\infty}ds\right|\\
 & \equiv\mathcal{N}_{2,a}+\mathcal{N}_{2,b}.
\end{align*}
Again applying Lemma \ref{lem:wave-int-J-1} and Lemma \ref{lem:wave-int-J-2} with $\alpha=3$,  $\alpha'=0$  and $\beta=1/2$
 we
find that
\begin{align*}
\mathcal{N}_{2,a} &=O\left(1\right)M^{2}(t)\int_{0}^{t}\int_{0}^{\infty}\left(t-s\right)^{-3/2}\left[e^{-\frac{\left(x-y-c\left(t-s\right)\right)^{2}}{2\nu\left(t-s\right)}}+e^{-\frac{\left(x-y+c\left(t-s\right)\right)^{2}}{2\nu\left(t-s\right)}}+e^{-\frac{\left(x+y-c\left(t-s\right)\right)^{2}}{(2\nu+\varepsilon)\left(t-s\right)}}\right]\\
 & \quad\times\left[\psi^{2}\left(y,s;c\right)+\psi^{2}\left(y,s;-c\right)\right]dyds\\
 & \leq O\left(1\right)M^{2}(t)\sum_{\lambda=\pm c}\left\{ \theta^2(x,t;\lambda,D_0)+(t+1)^{-1/4}\log(t+1)\psi^{3/2}(x,t;\lambda)\right\}\\
 &\quad +O(1)M^2(t)\sum_{\lambda=\pm c}\left\{\theta^{5/3}(x,t;\lambda,D_0)+\psi^{3/2}(x,t;\lambda)
 \right\} \\
 & \leq O\left(1\right)M^{2}(t)(1+t)^{-1/4}\mathbf{A}_{0}\left(x,t\right).
\end{align*}
$\mathcal{N}_{2,b}$ can be treated similarly as the term $\mathcal{N}_{2,2}$ hence we omit the details here,
\[
\mathcal{N}_{2,b}  \leq O\left(1\right)M^{2}(t)(1+t)^{-1/2}\mathbf{A}_{0}\left(x,t\right).
\]
Therefore we conclude that
\[
\partial_{x}\mathcal{N}_{2}  \leq O\left(1\right)M^{2}(t)(1+t)^{-1/4}\mathbf{A}_{0}\left(x,t\right).
\]
 
Combine above estimates to obtain  
\[
M(t)\le C\varepsilon_{0}+CM^2(t),
\] 
this together with the smallness of $\varepsilon_{0}$ and the continuity of $M(t)$ lead to $M(t)
\leq C$ for $t\geq0$,
that is 
\[
|U|\leq C \mathbf{A}_{0},\quad |U_x|\leq C(t+1)^{-1/4} \mathbf{A}_{0}.
\]
This completes the proof of  Theorem \ref{LC}.

\section{Appendix}

In the Appendix, we collect some computational lemmas for wave coupling, which are used
in previous proofs.
\begin{lemma}
\label{lem:intial_data}Let $D_{0}>0$, $r>1/2$. Then for any given
$E>D_{0}$, we have
\begin{align*}
I =\int_{-\infty}^{\infty}\frac{e^{-\frac{\left(x-y\right)^{2}}{D_{0}\left(t+1\right)}}}{\sqrt{t+1}}\left(1+y^2\right)^{-r}dy
  =O\left(1\right)\left[\frac{e^{-\frac{x^{2}}{E\left(t+1\right)}}}{\sqrt{t+1}}+\left(t+1+x^2\right)^{-r}\right].
\end{align*}
\end{lemma}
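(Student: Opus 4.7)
\textbf{Proof plan for Lemma \ref{lem:intial_data}.} The idea is to split both the region of integration in $y$ and the parameter regime in $(x,t)$, so that on each piece either the Gaussian factor or the algebraic factor can be pulled out of the integral.

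First I would dispose of the easy regime $x^{2}\leq t+1$. In that range one has $e^{-x^{2}/(E(t+1))}\geq e^{-1/E}$, so the first term on the right-hand side of the claim is bounded below by a constant multiple of $(t+1)^{-1/2}$. On the other hand, estimating the heat kernel trivially by $(t+1)^{-1/2}$ and using that $\int_{\mathbb{R}}(1+y^{2})^{-r}dy$ converges (because $r>1/2$), one immediately gets $I\leq C(t+1)^{-1/2}$, which is absorbed into the first term of the bound.

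The substantive case is $x^{2}>t+1$. Given $E>D_{0}$, I would choose $\alpha\in(0,1)$ so that $(1-\alpha)^{2}=D_{0}/E$ and split the $y$-integral into the two regions $\{|y|\leq\alpha|x|\}$ and $\{|y|>\alpha|x|\}$. On the first region, $|x-y|\geq(1-\alpha)|x|$, hence the Gaussian factor is bounded by $e^{-(1-\alpha)^{2}x^{2}/(D_{0}(t+1))}=e^{-x^{2}/(E(t+1))}$; pulling this uniform factor out and bounding the remaining $\int(1+y^{2})^{-r}dy$ by a constant yields a contribution of the form $O(1)(t+1)^{-1/2}e^{-x^{2}/(E(t+1))}$, matching the first term of the claim. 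On the second region, $(1+y^{2})^{-r}\leq(1+\alpha^{2}x^{2})^{-r}\leq C_{\alpha}(1+x^{2})^{-r}$, and since we are in the regime $x^{2}>t+1$ we have $(1+x^{2})^{-r}\leq 2^{r}(t+1+x^{2})^{-r}$; the remaining Gaussian integrates in $y$ to $O(1)$, giving the second term of the claim.

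The only subtlety is making the exponential constant $E$ match the prescribed one, which is exactly why I choose the split threshold $\alpha=1-\sqrt{D_{0}/E}$ rather than the more obvious $\alpha=1/2$; this is the main technical point. Everything else is routine integration, and the same cases cover both the small-$x$ and large-$x$ behavior. Combining the two cases and the two regions finishes the proof.
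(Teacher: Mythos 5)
Your proposal is correct and follows essentially the same route as the paper: the same split into the regimes $|x|\leq\sqrt{t+1}$ and $|x|>\sqrt{t+1}$, and in the latter the same decomposition of the $y$-integral into $\{|y|\leq \alpha|x|\}$ and its complement, with the Gaussian factor pulled out on the inner region and the algebraic weight pulled out on the outer one. The only cosmetic difference is that you fix the threshold $\alpha=1-\sqrt{D_{0}/E}$ explicitly, whereas the paper uses a generic cut at $|x|/M$ and lets the constant $E>D_{0}$ absorb the loss.
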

\begin{proof}
We consider the following two cases:

Case 1. $\left|x\right|\leq\sqrt{t+1}$;

Case 2. $\left|x\right|>\sqrt{t+1}$.

For case 1, one has
\[
I\leq\int_{-\infty}^{\infty}\frac{1}{\sqrt{t+1}}\left(1+y^2\right)^{-r}dy\leq\frac{O\left(1\right)}{\sqrt{t+1}}.
\]

For case 2, we decompose the integration region into two parts,
\begin{align*}
I & =\int_{\left|y\right|<\frac{\left|x\right|}{M}}\frac{e^{-\frac{\left(x-y\right)^{2}}{D_{0}\left(t+1\right)}}}{\sqrt{t+1}}\left(1+y^2\right)^{-r}dy+\int_{\left|y\right|>\frac{\left|x\right|}{M}}\frac{e^{-\frac{\left(x-y\right)^{2}}{D_{0}\left(t+1\right)}}}{\sqrt{t+1}}\left(1+y^2\right)^{-r}dy\\
 & \leq O\left(1\right)\int_{\left|y\right|<\frac{\left|x\right|}{M}}\frac{e^{-\frac{x^{2}}{E\left(t+1\right)}}}{\sqrt{t+1}}\left(1+y^2\right)^{-r}dy+O\left(1\right)\int_{\left|y\right|>\frac{\left|x\right|}{M}}\frac{e^{-\frac{\left(x-y\right)^{2}}{D_{0}\left(t+1\right)}}}{\sqrt{t+1}}(1+x^2)^{-r}dy\\
 & \leq O\left(1\right)\left[\frac{e^{-\frac{x^{2}}{E\left(t+1\right)}}}{\sqrt{t+1}}+\left(t+1+x^2\right)^{-r}\right].
\end{align*}
This completes the proof.
\end{proof}
\begin{lemma}[\cite{LZ1999}]
\label{lem:wave-int-J-1}Let $\alpha\geq\alpha'\geq0$, $\alpha-\alpha'<3$,
$\beta\geq0$, $\nu>0$ and $\lambda$ be a constant. Then for any
given $\varepsilon>0$, and all $-\infty<x<\infty$, $t\geq0$, we
have

\[
\begin{aligned} & \int_{0}^{t}\int_{-\infty}^{\infty}\left(t-s\right)^{-\left(\alpha-\alpha'\right)/2}\left(t-s+1\right)^{-\alpha'/2}e^{-\frac{\left(x-y-\lambda\left(t-s\right)\right)^{2}}{\nu\left(t-s\right)}}\cdot\left(s+1\right)^{-\beta/2}\psi^{3/2}\left(y,s;\lambda\right)dyds\\
 & =O\left(1\right)\left[\theta^{\gamma}\left(x,t;\lambda,\nu+\varepsilon\right)+\left(t+1\right)^{-\sigma/2}\psi^{3/2}\left(x,t;\lambda\right)\right]\\
 & \quad+\begin{cases}
O\left(1\right)\theta^{\gamma}\left(x,t;\lambda,\nu+\varepsilon\right)\log\left(t+1\right) & \mbox{for }\beta=3/2,\\
0, & otherwise,
\end{cases}\\
 & \quad+\begin{cases}
O\left(1\right)\left(t+1\right)^{-\sigma/2}\psi^{3/2}\left(x,t;\lambda\right)\log\left(t+1\right) & \mbox{for }\alpha=3\mbox{ or }\beta=2,\\
0, & otherwise,
\end{cases}
\end{aligned}
\]
where $\gamma=\alpha+\min\left(\beta,\frac{3}{2}\right)-\frac{3}{2}$,
and $\sigma=\min\left(\alpha,3\right)+\min\left(\beta,2\right)-3$.
\end{lemma}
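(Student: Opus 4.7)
\textbf{Plan for Lemma \ref{lem:wave-int-J-1}.} The integrand is a convolution of a Gaussian traveling with the same speed $\lambda$ as the algebraic factor $\psi^{3/2}(y,s;\lambda)$, so the natural approach is the standard ``same‑direction wave interaction'' splitting. First I would decompose the time integral as $\int_0^t = \int_0^{t/2} + \int_{t/2}^t$ and handle the two pieces separately, exploiting the fact that on $[0,t/2]$ the kernel factor $(t-s)^{-(\alpha-\alpha')/2}(t-s+1)^{-\alpha'/2}$ is essentially $(t+1)^{-\alpha/2}$ up to constants, while on $[t/2,t]$ the factor $(s+1)^{-\beta/2}\psi^{3/2}(y,s;\lambda)$ is controlled by $(t+1)^{-\beta/2}\psi^{3/2}(y,s;\lambda)$. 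This reduces the two pieces to one‑parameter integrals.

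For the $[0,t/2]$ piece, after pulling out the $(t+1)^{-\alpha/2}$ factor, I would perform the $y$‑integration first. I would split the $y$‑axis into a region $|y-\lambda s|\le \tfrac{1}{K}(|x-\lambda(t+1)|+\sqrt{t+1})$ where $\psi^{3/2}$ is large but the Gaussian forces $(x-y-\lambda(t-s))^2 \gtrsim (x-\lambda(t+1))^2$, yielding a contribution of type $\theta^{\gamma}(x,t;\lambda,\nu+\varepsilon)$; and its complement, where $\psi^{3/2}$ itself is bounded by $(\sqrt{t+1}+|x-\lambda(t+1)|)^{-3/2}$, pulled out of the integral so that the remaining Gaussian integrates to a constant and contributes to $(t+1)^{-\sigma/2}\psi^{3/2}(x,t;\lambda)$. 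Then the $s$‑integration reduces to $\int_0^{t/2}(s+1)^{-\beta/2}\,ds$, whose behavior gives the critical log when $\beta=3/2$ (the case $\beta<3/2$ contributes $(t+1)^{1-\beta/2}$ which absorbs cleanly into $\theta^\gamma$ with the chosen exponent $\gamma$).

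For the $[t/2,t]$ piece, I would instead integrate the heat kernel against itself after using $\psi^{3/2}(y,s;\lambda)\le C(t+1)^{-\beta/2}\cdots$ to extract the $s$‑decay, then split $y$ into the two regions as above. Now the $s$‑integral becomes $\int_{t/2}^t (t-s)^{-(\alpha-\alpha')/2}(t-s+1)^{-\alpha'/2}\,ds$; the borderline $\alpha=3$ is where $(t-s)^{-3/2}$ meets its critical scaling and produces the logarithmic factor, and the borderline $\beta=2$ produces a similar log from the auxiliary bound used to extract the $\psi$ decay. Tracking these thresholds carefully yields exactly the cases listed in the statement.

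The main obstacle is not the Gaussian computation itself but the \emph{bookkeeping} at the transition between the heat‑kernel‑dominated region and the $\psi$‑dominated region, and identifying precisely which exponent combinations $(\alpha,\beta)$ produce the logarithmic corrections. One must choose the splitting constant $K$ large enough that in the ``near‑characteristic'' region the Gaussian factor can absorb the discrepancy $(x-\lambda(t+1))^2$ at the expense of enlarging the diffusion constant from $\nu$ to $\nu+\varepsilon$, which accounts for the $\varepsilon$ in the stated bound. Since the result is quoted from \cite{LZ1999}, I would present only the decomposition scheme above and refer to that paper for the detailed borderline case analysis; no new idea beyond the standard Liu–Zeng technique is needed.
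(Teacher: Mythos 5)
The paper gives no proof of Lemma~\ref{lem:wave-int-J-1} at all: it is stated as a quoted result from \cite{LZ1999}, so your decision to present only the decomposition scheme and defer the borderline-case analysis to that reference matches the paper's treatment exactly, and the scheme you describe (splitting $\int_0^t$ at $t/2$, then splitting the $y$-integral into a near-characteristic region where the Gaussian absorbs $(x-\lambda(t+1))^2$ at the cost of enlarging $\nu$ to $\nu+\varepsilon$, and a far region where $\psi^{3/2}$ is pulled out) is indeed the standard Liu--Zeng technique behind the cited result. One bookkeeping slip worth correcting: on the $[0,t/2]$ piece the $y$-integration of the Gaussian against $\psi^{3/2}(y,s;\lambda)$ in the near-characteristic region produces an extra factor of order $(s+1)^{-1/4}$ (width $\sqrt{s+1}$ times height $(s+1)^{-3/4}$), so the residual $s$-integral is $\int_0^{t/2}(s+1)^{-\beta/2-1/4}\,ds$ rather than $\int_0^{t/2}(s+1)^{-\beta/2}\,ds$; it is this integral that is logarithmically critical at $\beta=3/2$ and whose subcritical value $(t+1)^{3/4-\beta/2}$ combines with the prefactor $(t+1)^{-\alpha/2}$ to reproduce $\theta^{\gamma}$ with $\gamma=\alpha+\beta-3/2$. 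As written, your $s$-integral would place the critical log at $\beta=2$ and would not match the exponent $\gamma$ in the statement; with that correction the sketch is consistent with the quoted conclusion.
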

\begin{lemma}[\cite{LZ1999}]
\label{lem:wave-int-J-2}Let the constants $\alpha\geq1$, $\alpha'\geq0$,
$0\leq\alpha-\alpha'<3$, $\beta\geq0$, $\nu>0$ and $\lambda\neq\lambda'$.
Then for any given $\varepsilon>0$, $K>2\left|\lambda-\lambda'\right|$,
and all $-\infty<x<\infty$, $t\geq0$, we have

\[
\begin{aligned} & \int_{0}^{t}\int_{-\infty}^{\infty}\left(t-s\right)^{-\left(\alpha-\alpha'\right)/2}\left(t-s+1\right)^{-\alpha'/2}e^{-\frac{\left(x-y-\lambda\left(t-s\right)\right)^{2}}{\nu\left(t-s\right)}}\cdot\left(s+1\right)^{-\beta/2}\psi^{3/2}\left(y,s;\lambda'\right)dyds\\
 & =O\left(1\right)\theta^{\gamma}\left(x,t;\lambda,\nu+\varepsilon\right)+O\left(1\right)\left(t+1\right)^{-\sigma/2}\cdot\left[\left(x-\lambda\left(t+1\right)\right)^{2}+\left(t+1\right)^{\left(5/3\right)-\left(1/3\right)\min\left(\beta,2\right)}\right]^{-3/4}\\
 & \quad+O\left(1\right)\left(t+1\right)^{-\sigma'/2}\left(\psi^{3/2}\left(x,t;\lambda'\right)\right)^{\left(1/3\right)\min\left(\alpha,3\right)}\\
 & \quad\cdot\left[\left(x-\lambda\left(t+1\right)\right)^{2}+\left(t+1\right)^{2}\right]^{-\left(3/4\right)\left(1-\left(1/3\right)\min\left(\alpha,3\right)\right)}\cdot\begin{cases}
1 & \mbox{if }\alpha\neq3\\
1+\log\left(t+1\right) & \mbox{if }\alpha=3
\end{cases}\\
 & \quad+O\left(1\right)\left|x-\lambda\left(t+1\right)\right|^{-\left(1/2\right)\min\left(\beta,5/2\right)-1/4}\left|x-\lambda'\left(t+1\right)\right|^{-\left(1/2\right)\left(\alpha-1\right)}\\
 & \quad\cdot\mbox{char}\left\{ \min\left(\lambda,\lambda'\right)\left(t+1\right)+K\sqrt{t+1}\leq x\leq\max\left(\lambda,\lambda'\right)\left(t+1\right)-K\sqrt{t+1}\right\} \\
 & \quad+O\left(1\right)\begin{cases}
\theta^{\alpha}\left(x,t;\lambda,\nu+\varepsilon\right)\log\left(t+1\right) & \mbox{if }\beta=3/2,\\
\left(t+1\right)^{-\left(1/2\right)\left(\alpha-1\right)}\psi^{3/2}\left(x,t;\lambda\right)\log\left(t+1\right) & \mbox{if }\beta=2,\\
0, & otherwise,
\end{cases}
\end{aligned}
\]
where $\gamma=\alpha+\frac{1}{2}\min\left(\beta,\frac{3}{2}\right)-\frac{3}{4}$,
$\sigma=\alpha+\min\left(\beta,2\right)-3$, $\sigma'=\min\left(\alpha,3\right)+\beta-3$.
\end{lemma}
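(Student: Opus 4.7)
\textbf{Proof plan for Lemma~\ref{lem:wave-int-J-2}.} The strategy is to exploit the separation of the two wave cones $y = x - \lambda(t-s)$ (concentration curve of the Gaussian) and $y = \lambda'(s+1)$ (concentration curve of $\psi^{3/2}$) induced by $\lambda \neq \lambda'$, and to evaluate the double integral by a careful region decomposition. First I would split $[0,t] = [0,t/2]\cup[t/2,t]$: on $[0,t/2]$ the factor $(t-s+1)^{-\alpha'/2}$ is comparable to $(t+1)^{-\alpha'/2}$ and can be pulled outside, while on $[t/2,t]$ the factor $(s+1)^{-\beta/2}$ is comparable to $(t+1)^{-\beta/2}$ and can be pulled outside. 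In each subinterval only one singular factor in $s$ remains inside, and the condition $\alpha-\alpha'<3$ guarantees integrability away from the crossing time.

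Next, for fixed $s$ I would decompose the $y$-integration into the polynomial concentration zone $|y-\lambda'(s+1)| \leq \sqrt{s+1}$ and its complementary tail zone. In the concentration zone $\psi^{3/2}$ is essentially constant of size $(s+1)^{-3/4}$, so integrating the Gaussian in $y$ is standard and produces a Gaussian in $x$ centered on the shifted trajectory $\lambda'(s+1) + \lambda(t-s)$. In the tail zone, I would use the elementary interpolation $(a+b)^{-3/2}\leq a^{-\theta\cdot 3/2}b^{-(1-\theta)\cdot 3/2}$ with $a=\sqrt{s+1}$ and $b=|y-\lambda'(s+1)|$: the first factor is absorbed into the Gaussian via completion of squares (this is exactly what forces the $\nu+\varepsilon$ inflation of the width in $\theta^\gamma$), while the surviving $|y-\lambda'(s+1)|^{-(1-\theta)\cdot 3/2}$ is integrated against the Gaussian tail. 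After the $y$-integration, the $s$-integration produces the three main contributions on the right-hand side: the term $\theta^\gamma(x,t;\lambda,\nu+\varepsilon)$ from concentration near the $\lambda$-cone; the term involving $\psi^{3/2}(x,t;\lambda')$ from sampling the Gaussian near the $\lambda'$-cone; and the mixed term $[(x-\lambda(t+1))^2+(t+1)^{(5/3)-(1/3)\min(\beta,2)}]^{-3/4}$ produced by balancing the off-peak Gaussian against the $\psi^{3/2}$ tail near the crossing time $s^{*}=(x-\lambda t)/(\lambda'-\lambda)$. Borderline exponents at $\alpha=3$, $\beta=3/2$, and $\beta=2$ yield the $\log(t+1)$ corrections.

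The main obstacle will be the characteristic-function term, which captures the regime in which $x$ lies strictly between the two cones with a buffer of width $K\sqrt{t+1}$ on each side. There the crossing time $s^{*}$ lies in the interior of $(0,t)$ and both cones contribute, yet neither kernel is pointwise concentrated at $(x,t)$, so integrability alone is not enough. I would handle this by rescaling $s = t\tau$ and performing a critical-point analysis around $\tau^{*}=s^{*}/t$, separately tracking the decay contributed by the $\lambda$-side, which gives $|x-\lambda(t+1)|^{-(1/2)\min(\beta,5/2)-1/4}$, and by the $\lambda'$-side, which gives $|x-\lambda'(t+1)|^{-(\alpha-1)/2}$. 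The hypothesis $K>2|\lambda-\lambda'|$ is used precisely to ensure that the buffer is wide enough so that the steepest-descent expansion around $\tau^{*}$ is uniformly valid and no boundary contribution from $s=0$ or $s=t$ pollutes the leading order. Reassembling these estimates with those from the two ``outside'' regions where $x$ is near one specific cone completes the proof.
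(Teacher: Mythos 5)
First, a point of reference: the paper does not prove Lemma~\ref{lem:wave-int-J-2} at all --- it is imported verbatim from Liu--Zeng \cite{LZ1999} and collected in the Appendix precisely so that its proof need not be reproduced. So there is no in-paper argument to compare yours against; the relevant comparison is with the proof in the cited source. Your outline is structurally consistent with that proof: the decomposition along the two characteristic cones $y=x-\lambda(t-s)$ and $y=\lambda'(s+1)$, the splitting of the time interval to freeze one of the two algebraic time factors, the interpolation $(a+b)^{-3/2}\leq a^{-3\theta/2}b^{-3(1-\theta)/2}$ for the $\psi$-tail, the identification of the crossing time $s^{*}=(x-\lambda t)/(\lambda'-\lambda)$ as the source of both the mixed term and the characteristic-function term, and the origin of the $\nu+\varepsilon$ inflation are all the right ingredients.

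That said, what you have written is a plan, not a proof, and the gap is exactly where the lemma has its content: none of the exponents $\gamma=\alpha+\tfrac{1}{2}\min(\beta,\tfrac{3}{2})-\tfrac{3}{4}$, $\sigma$, $\sigma'$, the powers $-\tfrac{1}{2}\min(\beta,5/2)-\tfrac{1}{4}$ and $-\tfrac{1}{2}(\alpha-1)$ in the between-cones term, or the precise borderline values $\beta=3/2$, $\beta=2$, $\alpha=3$ producing logarithms are actually derived; they are asserted as outputs of computations you have not performed. Two specific soft spots. (i) You say $\alpha-\alpha'<3$ "guarantees integrability away from the crossing time"; in fact the singular factor $(t-s)^{-(\alpha-\alpha')/2}$ is only integrable near $s=t$ after the $y$-integration of the Gaussian has contributed its width $\sqrt{\nu(t-s)}$, so the order of operations matters and the near-$s=t$ analysis cannot be decoupled from the $y$-integral as your splitting suggests. (ii) For the characteristic-function term, invoking "steepest descent around $\tau^{*}$" overstates what is available: the integrand is not analytic in the relevant variable and the actual argument in \cite{LZ1999} is an elementary two-sided splitting of the $s$-integral at $s^{*}$, using that on each side one of the two kernels is evaluated off-peak; the role of $K>2|\lambda-\lambda'|$ is to keep $s^{*}$ quantitatively inside $(0,t)$ so that both off-peak distances are bounded below by multiples of $|x-\lambda(t+1)|$ and $|x-\lambda'(t+1)|$ respectively. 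To turn your outline into a proof you would need to carry out the case analysis in full (or simply cite \cite{LZ1999}, as the paper does).
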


\section*{Acknowledgments}
The authors want to thank Professor Tai-Ping Liu for the discussions on the nonlinear decay rates, and thank anonymous referee for helpful comments and suggestions which improve the presentation of paper significantly. Du was supported by NSFC(Grant No. 11526049 and 11671075) and the Fundamental Research Funds for the Central Universities (No. 17D110906). Wang wants to thank Institute of Mathematics, Academia Sinica for financial support. Part of the work was done while Wang was a postdoc there.

\end{document}